\numberwithin{equation}{section}
\newtheorem{Thm}[equation]{Theorem}
\newtheorem{Prop}[equation]{Proposition}
\newtheorem{Cor}[equation]{Corollary}
\newtheorem{Lem}[equation]{Lemma}
\newtheorem{Conj}[equation]{Conjecture}
\theoremstyle{definition}
\newtheorem{Rmk}[equation]{Remark}
\newcommand{\Q}{\mathbb{Q}}
\newcommand{\K}{\widehat{K}}
\begin{document}

\title[The average of the smallest prime in a conjugacy class]{The average of the smallest prime in a conjugacy class}

\author[Peter Cho]{Peter J. Cho}
\address{Department of Mathematical Sciences, Ulsan National Institute of Science and Technology \\
Ulsan, Korea}
\email{petercho@unist.ac.kr}
\author[Henry Kim]{Henry H. Kim$^{\star}$}
\thanks{$^{\star}$ partially supported by an NSERC grant.}
\address{Department of
Mathematics, University of Toronto, Toronto, ON M5S 2E4, CANADA \\
and Korea Institute for Advanced Study, Seoul, Korea}
\email{henrykim@math.toronto.edu}

\subjclass[2010]{Primary 11N05, Secondary 11R44, 11R42}

\keywords{smallest prime in a conjugacy class, Chebotarev density theorem, Artin $L$-function}
\begin{abstract} Let $C$ be a conjugacy class of $S_n$ and $K$ an $S_n$-field. Let $n_{K,C}$ be the smallest prime which is ramified or whose Frobenius automorphism Frob$_p$ does not belong to $C$. Under some technical conjectures, we compute the average of $n_{K,C}$. 
For $S_3$ and $S_4$-fields, our result is unconditional. For $S_n$-fields, $n=3,4,5$, we give a different proof which depends on the strong Artin conjecture. Let $N_{K,C}$ be the smallest prime for which Frob$_p$ belongs to $C$. For $S_3$-fields, we obtain an unconditional result for the average of $N_{K,C}$ for $C=[(12)]$.
\end{abstract}

\maketitle
\section{Introduction}

For a fundamental discriminant $D$, let $\chi_D(\cdot)= \left( \frac{D}{\cdot}\right)$, and let $N_{D,\pm 1}$ be the smallest prime such that 
$\chi_D(p)=\pm 1$, resp. Let $n_{D,\pm 1}$ be the smallest prime such that $\chi_D(p) \neq \pm 1$. 
We can interpret $N_{D,1}$ ($N_{D,-1}$) as the smallest prime which splits completely (inert, resp.) in a quadratic field $\Bbb Q(\sqrt{D})$.
Under the assumption of the Generalized Riemann Hypothesis (GRH) for $L(s,\chi_D)$, one can show easily that
$N_{D,\pm 1}, n_{D,\pm 1}\ll (\log D)^2$. Erd\"{o}s \cite{Erdos} considered the average of those values over a family:
$\displaystyle\lim_{ X \rightarrow \infty} \frac {\sum_{2 < p \leq X} N_{p, -1}}{\pi(X)} = \sum_{k=1}^\infty \frac{p_k}{2^k}=3.67464...$
where $p$ runs through primes, and $p_k$ is the $k$-th prime. Pollack\footnote{In \cite{MP}, the value $4.98094..$ is misquoted as $4.98085$.}  \cite{P} generalized Erd\"{o}s' result to all fundamental discriminants:
\begin{eqnarray*}
\lim_{ X \rightarrow \infty} \frac{\sum_{|D| \leq X} N_{D,\pm1} }{\sum_{|D| \leq X}1} = \sum_{q} \frac{q^2}{2(q+1)}\prod_{p < q} \frac{p+2}{2(p+1)}= 4.98094\dots.
\end{eqnarray*}
Pollack \cite{P2} also computed the average of the least inert primes over cyclic number fields of prime degree. 

We generalize this problem to the setting of general number fields. We call a number field $K$ of degree $n$, an $S_n$-field if its Galois closure $\K$ over $\Q$ is an $S_n$ Galois extension. Let $C$ be a conjugacy class of $S_n$. For an unramified prime $p$, denote by Frob$_p$, a Frobenius automorphism of $p$. Define $n_{K,C}$ to be the smallest prime $p$ which is ramified in $K$ or for which Frob$_p \not\in C$, and define $N_{K,C}$ to be the smallest prime $p$ such that Frob$_p\in C$. Under GRH, we can show that 
$n_{K,C}, N_{K,C}\leq (\log |d_K|)^2$ (cf. \cite{BaS}).

In this paper we consider the average value of 
$n_{K,C}$ over fields in $L_n^{(r_2)}(X)$, which is the set of 
$S_n$-fields $K$ of signature $(r_1,r_2)$ with $|d_K| \leq X$, where $d_K$ is the discriminant of $K$:

\begin{Thm} \label{main}
Let $n=3,4,5$. When $n=5$, we assume either the strong Artin conjecture, or Conjecture \ref{QRconj2}.
 Then,  
\begin{eqnarray} \quad \label{average1}
\frac{1}{|L_n^{(r_2)}(X)|} \sum_{K \in L_n^{(r_2)}(X)} n_{K,C} = \sum_{ q } \frac{q(1-|C|/|S_n|+f(q)) }{1+f(q)}\prod_{p<q} \frac{|C|/|S_n|}{1+f(p)}+O \left( \frac{1}{\log X}\right). 
\end{eqnarray}
\end{Thm}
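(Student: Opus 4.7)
The strategy follows the Erd\H{o}s--Pollack template: convert the expectation of $n_{K,C}$ into a sum over primes of local densities of $S_n$-fields, identify those densities with counts of $S_n$-fields having prescribed splitting at finitely many primes, and control the tail of the resulting sum. For each prime $p$ and field $K$, set $E_p(K)=1$ if $p$ is unramified in $K$ with $\mathrm{Frob}_p \in C$, and $E_p(K)=0$ otherwise. By the definition of $n_{K,C}$,
\begin{equation*}
n_{K,C} \;=\; \sum_{q} q \Bigl(\prod_{p<q} E_p(K)\Bigr)\bigl(1-E_q(K)\bigr) \;=\; \sum_{q} q\Bigl(\prod_{p<q} E_p(K) - \prod_{p\le q} E_p(K)\Bigr),
\end{equation*}
so averaging over $K \in L_n^{(r_2)}(X)$ gives $|L_n^{(r_2)}(X)|^{-1}\sum_K n_{K,C} = \sum_q q\bigl(D_{<q}(X)-D_{\le q}(X)\bigr)$, where $D_{\le q}(X)$ is the proportion of $K \in L_n^{(r_2)}(X)$ with $E_p(K)=1$ for every $p \le q$.

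The main analytic input is a local factorization for these densities: for every finite set $S$ of primes,
\begin{equation*}
\frac{\#\{K \in L_n^{(r_2)}(X) : E_p(K)=1 \text{ for all } p \in S\}}{|L_n^{(r_2)}(X)|} \;=\; \prod_{p \in S}\frac{|C|/|S_n|}{1+f(p)} \;+\; o(1) \quad (X \to \infty),
\end{equation*}
with an error term uniform enough in $S$ to survive the subsequent truncation. Granted this, $1-(|C|/|S_n|)/(1+f(q))$ collapses to $(1-|C|/|S_n|+f(q))/(1+f(q))$, and the limiting $q$-th term of the sum matches the $q$-th summand on the right-hand side of \eqref{average1} exactly. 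For $n=3$ such a factorization is provided by the quantitative Davenport--Heilbronn asymptotics with local conditions; for $n=4$ and $5$, by Bhargava's counts of $S_4$- and $S_5$-fields with prescribed splitting; and in the $n=5$ case the uniformity required is precisely what the strong Artin conjecture (or, in its stead, Conjecture \ref{QRconj2}) supplies via analytic control of the associated Artin $L$-functions.

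With the factorization in hand one truncates the $q$-sum at a cutoff $Q=Q(X)$ growing like a power of $\log X$. For $q \le Q$ one substitutes the limiting products for $D_{<q}(X)$ and $D_{\le q}(X)$ and accumulates the finitely many local errors; for $q>Q$ the explicit product formula bounds the $q$-th summand by $\ll q\,(|C|/|S_n|)^{\pi(q)-1}$, which decays super-exponentially in $\pi(q)$ and contributes a tail dwarfed by $1/\log X$. Balancing the interior and tail contributions with an appropriate choice of $Q$ yields the stated $O(1/\log X)$ remainder.

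The principal difficulty is securing the local factorization with sufficiently uniform error terms, particularly across many primes simultaneously rather than one prime at a time. For $n=3,4$ the existing quantitative refinements of Davenport--Heilbronn and Bhargava suffice; for $n=5$ the required uniformity is not currently available unconditionally, which is why one must assume either the strong Artin conjecture --- which realizes the relevant Artin $L$-functions as $L$-functions of automorphic forms and so makes them directly amenable to analytic input --- or the concrete analytic hypothesis encoded in Conjecture \ref{QRconj2}. Once the factorization is in place, the telescoping identity and the super-exponential tail bound reduce the remaining work to routine bookkeeping.
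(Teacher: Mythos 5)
There is a genuine gap, and it sits exactly where the paper has to work hardest: the tail of the sum over $q$. Your telescoping identity and the treatment of $q$ up to a cutoff $Q\asymp\log X$ are fine, and they match the paper's treatment of the range $n_{K,C}\le y$ with $y\asymp\log X$ via the counting theorems with local conditions (which, note, are unconditional for $n=3,4,5$ --- Taniguchi--Thorne, Belabas--Bhargava--Pomerance/Yang, Shankar--Tsimerman --- so no conjecture is needed for ``uniformity'' there). The problem is your claim that for $q>Q$ the $q$-th summand is $\ll q\,(|C|/|S_n|)^{\pi(q)-1}$. That bound on the density $D_{<q}(X)$ requires the counting estimate $(\ref{estimate1})$ to beat its own error term, which is $O\bigl((\prod_{p<q}p)^{\gamma}X^{\delta}\bigr)\approx e^{\gamma q}X^{\delta}$; once $q$ exceeds a constant multiple of $\log X$ this error swamps the main term $X(|C|/|S_n|)^{\pi(q)}$ and the counting theorems say nothing about $D_{<q}(X)$. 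Meanwhile $n_{K,C}$ can a priori be as large as a power of $|d_K|$ (the best unconditional bound, via the quadratic resolvent and Norton/Pollack, is $\ll|d_K|^{1/4+\epsilon}$), so the sum over $q$ genuinely extends far beyond any poly-logarithmic cutoff, and in the range $\log X\ll q\ll X^{1/4}$ your ``super-exponential decay'' is unjustified.

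This tail is precisely what the paper's extra machinery is for, and it is also where you misplace the role of the hypotheses. The paper bounds $\sum_{n_{K,C}>y}n_{K,C}$ by splitting off an exceptional set $E(X)$: for $K\notin E(X)$ the quadratic resolvent $L$-function $L(s,\chi_F)$ has a zero-free region (all but $O(X^{\beta_n/2})$ of them do, by Kowalski--Michel), giving the pointwise bound $n_{K,C}\ll(\log|d_K|)^{16/((1-\alpha)A)}$, after which the counting theorem (now applied only with primes up to $y$) shows such $K$ are rare; for $K\in E(X)$ one uses the unconditional bound $n_{K,C}\ll|d_K|^{1/4+\epsilon}$ together with Conjecture \ref{QRconj2}, which limits how many $S_n$-fields share a given bad quadratic resolvent, to get a contribution $\ll X^{1-\beta_n/2}$. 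The alternative proof (Section \ref{AA}) replaces the quadratic-resolvent argument by zero-free regions of other Artin $L$-functions chosen per conjugacy class, which is where the strong Artin conjecture enters for $n=5$. So the strong Artin conjecture and Conjecture \ref{QRconj2} are not about uniformity of the local factorization at all; they are needed to control the exceptional fields with abnormally large $n_{K,C}$, and without some such argument your proposal cannot close.
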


For $S_3$-fields, Martin and Pollack \cite{MP} computed $(\ref{average1})$ for $C=e, [(123)]$. The main key ingredient was counting 
$S_3$-fields with finitely many local conditions, which is a recent result of Taniguchi and Thorne \cite{TT}. In \cite{CK3}, we were able to count $S_4$ and $S_5$-fields with finitely many local conditions using a result of Belabas, Bhargava and Pomerance \cite{BBP}, and a result of Shankar and Tsimerman \cite{ST}. 

Our key idea is to use the unique quadratic subextension $F=\Bbb Q[\sqrt{d_K}]$, which we call the quadratic resolvent. For unconditional bounds on $n_{K,C}$, we use the inequality $n_{K,C} \leq n_{F,1} \leq N_{F,-1}$ or $n_{K,C} \leq n_{F,-1} \leq N_{F,1}$  depending on whether $C\subset A_n$ or $C\not\subset A_n$. 

We have unconditional bounds of $N_{F,\pm 1}$ by Norton \cite{No} and Pollack \cite{P2014}. We review this in Section \ref{UB}.

By using the zero-free region of $L(s,\chi_F)$, we can get conditional bounds on $n_{K,C}$. This is done in Section \ref{CB}.
We need to count the number of $S_n$-fields with the same quadratic resolvent. For $S_3$-fields, we can estimate such numbers by using the result of \cite{CT}. This is done in Section \ref{QR}. 
But for $n\geq 4$, we do not have such a result. So we state it as 
Conjecture \ref{QRconj2}. In the Appendix, using the result in \cite{CT1}, we count the number of $S_4$-fields with the given cubic resolvent.

In section \ref{AV}, we establish $(\ref{average1})$ under the counting conjectures 
$(\ref{estimate})-(\ref{estimate1})$ and Conjecture \ref{QRconj2}. See Theorem \ref{main1} and tables below it for the average values which were computed using PARI. 

In Section \ref{Counting}, we explain counting number fields with finitely many local conditions, which is the main tool for the proof. 

In Section \ref{AA}, we give another proof for $S_n$-fields, $n=3,4,5$, in order to avoid using Conjecture \ref{QRconj2}. 
We use the strong Artin conjecture and zero-free regions of different Artin $L$-functions for each conjugacy class.

In Section \ref{N}, we consider the average of $N_{K,C}$, the smallest prime $p$ with Frob$_p \in C$. In Martin and Pollack \cite{MP}, the average of $N_{K,C}$ for $S_3$-fields was studied under GRH for the Dedekind zeta functions. We generalize their result and compute the averages of $N_{K,C}$ under GRH for Dedekind zeta functions and the counting conjectures $(\ref{estimate})-(\ref{estimate1})$. Tables for the average values for $S_3$, $S_4$, and $S_5$ are provided.

In Section \ref{N-b}, we obtain an unconditional result for the average of $N_{K,C'}$ under Conjecture \ref{QRconj2} where $C'$ is the union of all the conjugacy classes not in $A_n$. Hence in particular, for $S_3$-fields and $C=[(12)]$, we have an unconditional result for the average of $N_{K,C}$.

\subsection*{Acknowledgments} We thank G. Henniart for his help on the conductor of Artin $L$-functions. We thank P. Pollack for helpful discussions.

\section{Counting number fields with local conditions} \label{Counting}

Let $K$ be a $S_n$-field for $n\geq 3$. Let $\mathcal S = (\mathcal{LC}_p)$ be a finite set of local conditions: $\mathcal{LC}_p=S_{p,C}$ means that $p$ is unramified and the conjugacy class of Frob$_p$ is $C$.  
Define $|\mathcal S_{p,C}|=\frac {|C|}{|S_n|(1+f(p))}$ for some function $f(p)$ which satisfies
$f(p)=O(\frac 1p)$. There are also several splitting types of ramified primes, which are denoted by $r_1,r_2,\dots,r_w$: $\mathcal{LC}_p=S_{p,r_j}$ means that $p$ is ramified and its splitting type is $r_j$. We assume that there are positive valued functions $c_1(p)$, $c_2(p)$, $\dots$, $c_w(p)$ with $\sum_{i=1}^w c_i(p)=f(p)$ and define $|S_{p,r_i}|=\frac{c_i(p)}{1+f(p)}$. 
Let $|\mathcal S|=\prod_p |\mathcal{LC}_p|$. 
\begin{Conj} \label{Conj-estimate}
Let $L_n^{(r_2)}(X;\mathcal S)$ be the set of $S_n$-fields $K$ of signature $(r_1,r_2)$ with $|d_K| < X$ and the local conditions $\mathcal S$. 
\begin{eqnarray} 
|L_n^{(r_2)}(X)| &=& A(r_2) X +O(X^{\delta}),\label{estimate}\\
|L_n^{(r_2)}(X;\mathcal S)| &=& |\mathcal S| A(r_2) X + O\left(\Big(\prod_{p \in S} p\Big)^{\gamma} X^{\delta} \right) \label{estimate1}
\end{eqnarray}
for some positive constant $\delta<1$ and $\gamma$, and the implied constant is uniformly bounded for $p$ and local conditions at $p$. 
\end{Conj}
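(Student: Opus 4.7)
The plan is to reduce both estimates to explicit counting theorems for $S_n$-fields coming from prehomogeneous vector space parametrizations, which are available precisely for $n = 3, 4, 5$; this is the reason the statement is phrased as a conjecture, since for larger $n$ no analogous parametrization is known. For (\ref{estimate}), the Davenport-Heilbronn theorem for $n = 3$ combined with the power-saving error term of Belabas-Bhargava-Pomerance \cite{BBP} and Taniguchi-Thorne \cite{TT} directly yields $|L_3^{(r_2)}(X)| = A(r_2) X + O(X^{\delta})$. For $n = 4$ and $n = 5$, the analogous results of \cite{BBP} and Shankar-Tsimerman \cite{ST} rest on Bhargava's parametrizations by pairs of ternary quadratic forms and by quadruples of alternating $5 \times 5$ matrices, respectively. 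Separating by signature $(r_1, r_2)$ corresponds to restricting to a single $\mathrm{GL}_n(\mathbb{R})$-orbit on the real points of the parametrizing space and affects only the leading constant $A(r_2)$.

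For (\ref{estimate1}), a local condition $\mathcal{LC}_p = S_{p,C}$ in the unramified case, or $\mathcal{LC}_p = S_{p, r_i}$ in the ramified case, translates through the local Delone-Faddeev/Bhargava dictionary between $p$-adic forms and \'etale $\mathbb{Q}_p$-algebras into the requirement that the parametrizing integral form lies in a specified union of residue classes modulo a bounded power of $p$. The densities of these classes are exactly $|C|/(|S_n|(1+f(p)))$ and $c_i(p)/(1+f(p))$, which pins down the functions $f(p)$ and $c_i(p)$ in the statement. The main term then multiplies by $|\mathcal{S}| = \prod_p |\mathcal{LC}_p|$, and the passage from counting parametrizing forms to counting maximal orders (i.e., genuine $S_n$-fields) is handled by the uniform Ekedahl sieve.

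The main obstacle is obtaining the error term uniformly in the set $S$ of prime conditions, with only polynomial loss $(\prod_{p \in S} p)^{\gamma}$. The proofs in \cite{BBP}, \cite{TT}, and \cite{ST} are precisely structured to yield this: their lattice-point counts in fundamental domains for $\mathrm{GL}_n(\mathbb{Z})$ are carried out over arbitrary sublattices of index growing like $\prod_{p \in S} p^{\kappa_n}$, with error terms polynomial in this index. Combining these uniform counts with the maximality sieve, and at the ramified primes with an explicit enumeration of the relevant $p$-adic strata to extract the $c_i(p)$, yields (\ref{estimate1}). For $n \geq 6$ neither the parametrization nor the corresponding uniform lattice-point count is available, so the statement remains genuinely conjectural in that regime.
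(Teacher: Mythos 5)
The paper never actually proves this statement: it is labelled a Conjecture for general $n$, and for $n = 3, 4, 5$ the paper simply records that it holds by citing the theorems of Taniguchi--Thorne \cite{TT} (cubic) and the authors' earlier work \cite{CK3}, which in turn rests on \cite{BBP} and \cite{yang} (quartic) and on \cite{ST} (quintic), without reproducing any of those arguments. Your sketch is a reasonable high-level account of what those references do---geometry of numbers for the Delone--Faddeev and Bhargava parametrizations, local conditions rendered as congruence conditions via the local dictionary with \'etale $\mathbb{Q}_p$-algebras, an Ekedahl-type uniformity sieve to pass to maximal orders, and polynomial dependence of the error on the imposed level---so in substance you are describing the same route the paper implicitly relies on.

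A few details are off, though none is fatal. The attributions for $n = 4$ and $n = 5$ are mixed together: BBP \cite{BBP} covers the cubic and quartic cases, Shankar--Tsimerman \cite{ST} covers the quintic case, and the paper's $S_4$ theorem in \cite{CK3} also leans on Yang's thesis \cite{yang} for the uniformity in the local conditions, which your sketch omits. The groups acting in Bhargava's parametrizations are products such as $\mathrm{GL}_2 \times \mathrm{SL}_3$ (pairs of ternary quadratic forms, quartic case) and $\mathrm{GL}_4 \times \mathrm{SL}_5$ (quadruples of $5\times 5$ alternating forms, quintic case), not a single $\mathrm{GL}_n$, so the description of signature as a choice of $\mathrm{GL}_n(\mathbb{R})$-orbit should be adjusted; what is true is that fixing $(r_1, r_2)$ selects a finite set of real orbits for the relevant product group. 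Finally, to fully justify (\ref{estimate1}) one must check not only polynomial dependence of the error on $\prod_{p\in S} p$ but that the exponent $\delta < 1$ in $X$ is the same as in (\ref{estimate}); the cited theorems supply this, but your phrasing glosses over it.
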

This conjecture is true for $S_3, S_4$ and $S_5$-fields.  For $S_3$-fields, we use a result of Taniguchi and Thorne \cite{TT}.  
Let $f(p)=p^{-1}+p^{-2}$. Put
$$|S_{p,r_j}|=\frac{p^{-1}}{1+f(p)},\:\frac{p^{-2}}{1+f(p)},
$$
for $r_j=(1^21),  (1^3),$ respectively. 
Then
\begin{Thm} \cite{TT} 
Let $D_0=\frac {C_1 C^0}{12\zeta(3)}$, $D_1=\frac{C_1C^1}{12\zeta(3)}$.
\begin{eqnarray*}
|L_3^{(r_2)}(X,\mathcal S)| = |\mathcal S| D_{r_2} X+ O_{\epsilon}\left(\bigg(\prod_{p\in \mathcal S} p \bigg)^{\frac {16}9} X^{\frac 79+\epsilon }\right).
\end{eqnarray*}
\end{Thm}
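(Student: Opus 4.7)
The plan is to parameterize cubic rings by binary cubic forms, count $GL_2(\mathbb{Z})$-orbits of such forms subject to the prescribed local conditions, and sieve to the rings of integers of $S_3$-fields, using Shintani zeta functions to obtain the power-saving error and its uniformity in $\mathcal{S}$.

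By the Delone--Faddeev correspondence, isomorphism classes of cubic rings over $\mathbb{Z}$ are in discriminant-preserving bijection with $GL_2(\mathbb{Z})$-orbits of integral binary cubic forms. Each local condition $\mathcal{LC}_p$ (a prescribed splitting or ramification type at $p$) corresponds to a $GL_2(\mathbb{Z}_p)$-invariant subset of forms over $\mathbb{Z}_p$, i.e.\ a union of residue classes modulo a small power $p^{k_p}$. I would enumerate $GL_2(\mathbb{Z})$-orbits of irreducible binary cubic forms of discriminant bounded by $X$ lying in the prescribed congruence classes modulo $M=\prod_{p\in\mathcal{S}}p^{k_p}$. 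Averaging over a Davenport--Heilbronn fundamental domain for $GL_2(\mathbb{Z})\backslash GL_2(\mathbb{R})$, combined with geometry of numbers, yields a leading contribution of shape $|\mathcal{S}|\,D_{r_2}\,X$: the archimedean volume contributes $C^{r_2}$, and the product of $p$-adic densities at $p\in\mathcal{S}$ recovers $\prod |\mathcal{LC}_p|=|\mathcal{S}|$.

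To sharpen the error from the $O(X^{5/6})$ of a naive lattice-point count down to $O(X^{7/9+\epsilon})$, I would work with the Shintani zeta function $\xi(s)=\sum a_n n^{-s}$ attached to binary cubic forms together with its twists by characters of the orbit set modulo $M$. These zeta functions admit meromorphic continuation and a functional equation whose analytic conductor scales polynomially with $M$. Shifting the contour past the secondary pole at $s=5/6$ and applying convexity estimates in the critical strip produces the claimed error size. To pass from cubic rings to maximal cubic rings (rings of integers) I would then sieve by inclusion--exclusion over primes $q\notin\mathcal{S}$: the $q$-adic density of nonmaximal cubic rings is $q^{-2}+O(q^{-3})$, so the infinite product $\prod_{q\notin\mathcal{S}}(1-q^{-2}+\cdots)$ combines with the Delone--Faddeev normalization $C_1/12$ to produce the factor $C_1/(12\zeta(3))$, hence the stated constants $D_{r_2}$.

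The main difficulty will be extracting the uniformity $(\prod_{p\in\mathcal{S}}p)^{16/9}$. The twisted Shintani zeta functions carry polynomial conductor growth in $M$, and the cut-off in the maximality sieve must be chosen so that the Perron/contour-integral error, the convexity bound on the $\xi$-twists, and the sieve tail all remain below $X^{7/9+\epsilon}$. Balancing these competing contributions is what produces the specific exponent $16/9$, and I expect this optimization step — rather than the setup of the parametrization or the sieve — to be the most delicate part of the argument.
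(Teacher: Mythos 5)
This theorem is quoted from Taniguchi--Thorne and is not proved in the paper; it is used as a black box. Your sketch accurately reflects the method of \cite{TT}: the Delone--Faddeev parametrization of cubic rings by $GL_2(\mathbb{Z})$-orbits of integral binary cubic forms, the encoding of each splitting type at $p$ as a congruence condition modulo a fixed power of $p$, the use of (twisted) Shintani zeta functions with their poles at $s=1$ and $s=5/6$, a contour shift to obtain the power-saving error, and an inclusion--exclusion maximality sieve producing the $\zeta(3)^{-1}$ factor, with the $(\prod_{p\in\mathcal S} p)^{16/9}$ uniformity coming from polynomial conductor growth of the twisted zeta functions. Since the paper offers no proof of its own and simply cites \cite{TT}, your outline is essentially the cited argument.
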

 
For $S_4$-fields, take $f(p)=p^{-1}+2p^{-2}+p^{-3}$. For a conjugacy class $C$ of $S_4$, 
let $$|S_{p,C}|=\frac{|C|}{24(1+f(p))}.$$
Put
$$|S_{p,r_j}|=\frac{1/2 \cdot 1/p}{1+f(p)},\:\frac{1/2 \cdot 1/p}{1+f(p)},\:\frac{1/2 \cdot 1/p^2}{1+f(p)},\:\frac{1/2 \cdot 1/p^2}{1+f(p)},\:\frac{1/p^2}{1+f(p)},\: \mbox{and }\frac{1/p^3}{1+f(p)}
$$
for $r_j=(1^211),(1^22),(1^21^2),(2^2),(1^31), (1^4),$ respectively. By using the results of \cite{BBP}, \cite{yang}, we showed

\begin{Thm} \cite{CK3} 
Let $D_i=d_i \prod_{p} ( 1+p^{-2} -p^{-3} - p^{-4})$, and
$d_0=\frac{1}{48},d_1=\frac{1}{8},$ and $d_0=\frac{1}{16}$.
\begin{eqnarray*}
|L_4^{(r_2)}(X,\mathcal S) |= |\mathcal S| D_{r_2} X+ O_{\epsilon}\left(\bigg(\prod_{p\in \mathcal S} p \bigg)^2 X^{\frac{143}{144}+\epsilon }\right).
\end{eqnarray*}
\end{Thm}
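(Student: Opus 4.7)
The plan is to combine Bhargava's parametrization of quartic rings by pairs of ternary quadratic forms with the uniform BBP--Yang sieve for maximality, layered over a lattice restriction encoding the local conditions $\mathcal{S}$. Recall that isomorphism classes of quartic orders are in bijection with $\mathrm{GL}_2(\mathbb{Z})\times \mathrm{SL}_3(\mathbb{Z})$-orbits on the lattice $V_{\mathbb{Z}}=\mathbb{Z}^{2}\otimes\mathrm{Sym}^{2}(\mathbb{Z}^{3})$ of pairs $(A,B)$ of integer ternary quadratic forms, with the discriminant of the order equal to $\mathrm{Disc}(A,B)$. Each local condition $\mathcal{LC}_p$ in $\mathcal{S}$---be it a Frobenius class $C$ at an unramified prime or a splitting type $r_j$ at a ramified prime---is cut out by a $\mathrm{GL}_2(\mathbb{Z}_p)\times\mathrm{SL}_3(\mathbb{Z}_p)$-invariant open subset $U_p\subset V(\mathbb{Z}_p)$, and Bhargava's local mass formula for quartic $\mathbb{Q}_p$-algebras gives $\mathrm{mass}(U_p)=|\mathcal{LC}_p|$ after normalization against the local mass of maximal orders. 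Imposing all the $\mathcal{LC}_p$ jointly replaces $V_{\mathbb{Z}}$ by a sublattice $V_{\mathcal{S}}$ of index $\prod_{p\in\mathcal{S}}|\mathcal{LC}_p|^{-1}\ll\bigl(\prod_{p\in\mathcal{S}}p\bigr)^{O(1)}$.

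First I would fix a fundamental domain $\mathcal{F}^{(r_2)}$ for the action on $V(\mathbb{R})$ corresponding to signature $(r_1,r_2)$ and count $\mathrm{GL}_2(\mathbb{Z})\times\mathrm{SL}_3(\mathbb{Z})$-orbits in $V_{\mathcal{S}}\cap\mathcal{F}^{(r_2)}$ with $|\mathrm{Disc}|<X$ corresponding to \emph{maximal} orders. Bhargava's averaging of the lattice-point count produces a main term of the shape $|\mathcal{S}|\,d_{r_2}\,X$, and the maximality sieve---inclusion--exclusion over Bhargava's non-maximal sublattices at each $p\notin\mathcal{S}$---inserts the Euler factor $\prod_{p}(1+p^{-2}-p^{-3}-p^{-4})$, producing $|\mathcal{S}|\,D_{r_2}\,X$. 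The error splits into a boundary error from the lattice-point count and the tail of the maximality sieve at large primes. The boundary error is handled by the Shankar--Tsimerman refinement \cite{ST}, while the sieve tail is bounded via the uniform Belabas--Bhargava--Pomerance estimate \cite{BBP} strengthened by Yang \cite{yang}.

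The main obstacle is to show that these uniform estimates remain valid after passage to the sublattice $V_{\mathcal{S}}$ with only a polynomial penalty of $(\prod_{p\in\mathcal{S}}p)^{2}$: the cuspidal count of reducible or discriminant-squareful pairs must be controlled uniformly in the lattice, and the exponent $2$ records the worst-case inflation coming from the dominant cusp against the lattice covolume. A secondary technical point is the ramified case at $p\in\mathcal{S}$, where one must verify that the prescribed functions $c_i(p)$ match the $p$-adic masses of quartic étale $\mathbb{Q}_p$-algebras of ramification type $r_j$, so that the total local mass $(1+f(p))^{-1}$ is correctly apportioned among $S_{p,C}$ and the $S_{p,r_j}$.
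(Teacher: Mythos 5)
The paper does not prove this theorem; it states it as a citation to \cite{CK3}, with the only methodological hint being the remark that it follows ``by using the results of \cite{BBP}, \cite{yang}.'' Your reconstruction --- Bhargava's parametrization of quartic rings by $\mathrm{GL}_2(\mathbb{Z})\times\mathrm{SL}_3(\mathbb{Z})$-orbits on $\mathbb{Z}^2\otimes\mathrm{Sym}^2(\mathbb{Z}^3)$, a maximality sieve producing the Euler product $\prod_p(1+p^{-2}-p^{-3}-p^{-4})$, and congruence restrictions encoding $\mathcal{S}$ --- is exactly the machinery those references supply, so the strategy is the right one. One misattribution worth flagging: you invoke Shankar--Tsimerman \cite{ST} for the boundary/cusp error, but that paper treats $S_5$ quintic fields (pairs of quinary alternating forms under $\mathrm{GL}_4\times\mathrm{SL}_5$); in the quartic case the power-saving error, including the maximality-sieve tail, comes from \cite{BBP} together with Yang's thesis \cite{yang}, which is what the present paper actually cites. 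A second, smaller point: ``imposing all the $\mathcal{LC}_p$ jointly replaces $V_{\mathbb{Z}}$ by a sublattice $V_{\mathcal{S}}$'' is shorthand, since the local conditions carve out unions of cosets of $\prod_p p^{k_p}V(\mathbb{Z}_p)$ rather than a single sublattice; the geometry-of-numbers count must be summed over those cosets, and the uniform control of that sum and of the cuspidal and nonmaximal contributions as the modulus $\prod_{p\in\mathcal{S}}p$ grows is precisely where the exponents $(\prod_{p\in\mathcal{S}}p)^2$ and $X^{143/144}$ are earned. You correctly identify this as the main obstacle, but your sketch asserts those exponents rather than deriving them --- acceptable for an outline, and unavoidable without reproducing the estimates of \cite{CK3}.
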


For $S_5$-fields, take $f(p)=p^{-1}+2p^{-2}+2p^{-3}+p^{-4}$.  For a conjugacy class $C$ of $S_5$, 
let $$|S_{p,C}|=\frac{|C|}{120(1+f(p))}.$$

Put 
\begin{eqnarray*}
|S_{p,r_j}| & =& \frac{1/6 \cdot 1/p}{1+f(p)},\:\frac{1/2 \cdot 1/p}{1+f(p)},\:\frac{1/3 \cdot 1/p}{1+f(p)},\:\frac{1/2 \cdot 1/p^2}{1+f(p)},\: \frac{1/2 \cdot 1/p^2}{1+f(p)},\:\frac{1/2 \cdot 1/p^2}{1+f(p)},\frac{1/2 \cdot 1/p^2}{1+f(p)},\\
 & & \frac{1/p^3}{1+f(p)},\:\frac{1/p^3}{1+f(p)},\:\mbox{ and } \frac{1/p^4}{1+f(p)}
\end{eqnarray*}
for $r_j=(1^2111),(1^212),(1^23),(1^21^21),(2^2 1),(1^3 11),(1^3 2),(1^3 1^2), (1^4 1), (1^5),$ respectively.

By using the result of \cite{ST}, we showed  

\begin{Thm} \cite{CK3} 
Let $D_i=d_i \prod_p (1+ p^{-2} -p^{-4} -p^{-5})$ and $d_0,d_1,d_2$ are $\frac{1}{240}, \frac{1}{24}$ and $\frac{1}{16}$, respectively.
\begin{eqnarray*}
|L_5^{(r_2)}(X,\mathcal S)|= |\mathcal S| D_{r_2} X+ O_{\epsilon}\left(\bigg(\prod_{p\in \mathcal S} p \bigg)^{2-\epsilon} X^{\frac{199}{200}+\epsilon }\right).
\end{eqnarray*}
\end{Thm}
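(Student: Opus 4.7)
The plan is to extend the Shankar--Tsimerman \cite{ST} power-saving count of $S_5$-quintic fields ordered by discriminant to accommodate the local conditions $\mathcal S$, via a sieve inside Bhargava's parametrization. Recall that $S_5$-quintic rings correspond bijectively to $GL_4(\mathbb{Z}) \times GL_5(\mathbb{Z})$-equivalence classes of integer points in $\mathbb{Z}^4 \otimes \wedge^2 \mathbb{Z}^5$, and that \cite{ST} counts such orbits of bounded discriminant with a power-saving error $O_\epsilon(X^{199/200+\epsilon})$ by averaging over a fundamental domain. The proposal is to redo this count with the lattice of forms restricted to a union of congruence classes dictated by $\mathcal S$, then assemble main terms and error terms.

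\textbf{Step 1: Translation of $\mathcal S$ to a congruence condition.} First, I would invoke the local form of the parametrization: for each prime $p \in \mathcal S$ and each prescribed splitting type (unramified Frobenius class $C$, or ramified type $r_j$), the condition at $p$ is a finite union of $GL_4(\mathbb{Z}_p) \times GL_5(\mathbb{Z}_p)$-orbits detectable by reduction modulo $p^{k}$ for some uniformly bounded $k$, and the normalized $p$-adic masses of these orbit sets reproduce precisely the local densities $|S_{p,C}|$ and $|S_{p,r_j}|$ listed before the theorem (with the Euler factor $1+p^{-2}-p^{-4}-p^{-5}$ already built into $D_{r_2}$ from the ratio of total local mass to the mass of maximal quintic rings at $p$). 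Consequently $\mathcal S$ cuts out the orbits by a single congruence condition modulo $M = \prod_{p \in \mathcal S} p^{k}$, and summing the main contributions of each admissible congruence class produces the predicted main term $|\mathcal S| D_{r_2} X$ on the nose.

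\textbf{Step 2: Uniformity in the level, and the main obstacle.} Second, I would replay the \cite{ST} geometry-of-numbers argument with the lattice of forms replaced by the relevant union of cosets, tracking the dependence of every cuspidal and tail estimate on the level $M$. The main obstacle, and the technical heart of the argument, is to show that the $M$-dependence of the error term is polynomial with exponent at most $2$ in the primes dividing $M$; this parallels the level-aspect analyses of \cite{BBP} for cubic fields and of the $S_4$ case in \cite{CK3}, and hinges on bounding the contribution of integral orbits with reducible or otherwise ``thin'' reduction modulo $p$ near the cusp of the fundamental domain. Once this uniformity is in place, summing the errors over the admissible congruence classes and absorbing the bounded powers of $k$ into $X^{\epsilon}$ yields exactly the claimed bound $O_\epsilon\bigl((\prod_{p \in \mathcal S} p)^{2-\epsilon} X^{199/200+\epsilon}\bigr)$.
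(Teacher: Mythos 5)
The paper does not prove this theorem itself; it is quoted directly from the companion work \cite{CK3}, with the one-line remark that it is obtained ``by using the result of \cite{ST}.'' So there is no in-paper proof to compare against, but your proposed route is the expected one and agrees with that remark: impose $\mathcal S$ as a congruence condition in Bhargava's $(\mathbb{Z}^4\otimes\wedge^2\mathbb{Z}^5, GL_4\times GL_5)$ parametrization, redo the geometry-of-numbers count of \cite{ST} over the resulting union of cosets of a sublattice, track level dependence, and sieve to maximal quintic rings.

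Two places where your plan asserts rather than argues. First, you fold the maximality sieve into the phrase ``ratio of total local mass to the mass of maximal quintic rings,'' but this sieve runs over \emph{all} primes, not just those in $\mathcal S$; to keep the power-saving exponent $199/200$ one must verify that the tail estimate for non-maximal rings from \cite{ST} survives the restriction to a congruence class modulo $M=\prod_{p\in\mathcal S}p^{k}$ with error polynomial in $M$. Second, your claim that ``the $M$-dependence of the error term is polynomial with exponent at most $2$'' is exactly the content that needs proof: in the geometry-of-numbers count, restricting to a sublattice of index roughly $M^{\dim}$ changes the main term by a factor $M^{-\dim}$ (absorbed into $|\mathcal S|$), but the cusp and boundary contributions are not automatically scaled the same way, and one must check that the exponent $2-\epsilon$ (rather than something depending on the ambient dimension $40$) actually emerges. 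In \cite{BBP} for cubic forms this was a delicate analysis of which orbits near the cusp can reduce to the prescribed residue classes; the same delicacy is present here and is what \cite{CK3} must supply. Your outline identifies the right ingredients but leaves both of these verifications as declarations.
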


\section{Bounds on $n_{K,C}$}

Recall that $n_{K,C}$ is the smallest prime which is ramified in $K$ or for which Frob$_p$ does not belong to $C$. Now $\widehat{K}$ has the quadratic field 
$F$ fixed by $A_n$, i.e., $F=\Bbb Q[\sqrt{d_K}]$. Let $d_F$ be the discriminant of $F$. Then clearly, $|d_F|\leq |d_K|$.
By abuse of language, we call such $F$ the quadratic resolvent of $K$.

If $C \subset A_n$ and $Frob_p \in C$, then $p$ splits in $F$. Hence $n_{K,C}\leq n_{F,1}$.

If $C \not\subset A_n$ and $Frob_p \in C$, then $p$ is inert in $F$. Hence $n_{K,C}\leq n_{F,-1}$, 

\subsection{Unconditional bounds of $n_{K,C}$} \label{UB}

By Norton \cite{No}, $N_{F,-1}\ll_\epsilon |d_F|^{\frac{1}{4\sqrt{e}}+\epsilon} \ll_\epsilon |d_K|^{\frac{1}{4\sqrt{e}}+\epsilon}$. Since $n_{F,1} \leq N_{F,-1}$, 
\begin{eqnarray*}
n_{K,C}  \ll_\epsilon |d_F|^{\frac{1}{4\sqrt{e}}+\epsilon} \ll_\epsilon |d_K|^{\frac{1}{4\sqrt{e}}+\epsilon} \mbox{ for $C \subset A_n$.}
\end{eqnarray*}

By Pollack \cite{P2014},  $N_{F,1} \ll_\epsilon |d_F|^{\frac 14+\epsilon} \ll_\epsilon |d_K|^{\frac 14+\epsilon}$. Since $n_{F,-1} \leq N_{F,1}$,
\begin{eqnarray*}
n_{K,C} \ll_\epsilon |d_F|^{\frac 14+\epsilon} \ll_\epsilon |d_K|^{\frac 14+\epsilon} \mbox{ for $C \not\subset A_n$.}
\end{eqnarray*}

\subsection{Conditional bounds of $n_{K,C}$} \label{CB}
We obtain conditional bounds on $n_{F,C}$ under the zero-free region of $L(s,\chi_F)$, where $\chi_F(p)=(\frac {d_F}p)$.
Suppose $L(s,\chi_F)$ is zero free on $[\alpha,1]\times [-(\log |d_F|)^2, (\log |d_F|)^2]$. Then by \cite{CK2},
$$
-\frac{L'}{L}(\sigma, \chi_F) = \sum_{ p < ( \log |d_F|)^{16/(1-\alpha)}}  \frac{\chi_F(p)\log p}{p^\sigma} + O(1),
$$
for $1 \leq \sigma \leq 3/2$. Hence, it implies that
$$
\left| -\frac{L'}{L}(\sigma, \chi_F) \right|  \leq \frac{16}{(1-\alpha)}\log \log |d_F| + O(1).
$$

Now for $C \subset A_n$, consider $\zeta_F(s)=\zeta(s)L(s,\chi_F)$. We obtain
$$\sum_p \frac {(1+\chi_F(p))\log p}{p^{\sigma}}=\frac 1{\sigma-1}-\frac {L'}L(\sigma,\chi_F)+O(1).
$$

 For a while, we assume that $n_{K,C} \geq 3$. For each prime $p < n_{K,C}$, the prime $p$ splits in the quadratic resolvent $F$. (i.e., $\chi_F(p)=1$ for all $p < n_{K,C}$.) 

Then
$$\sum_{p<n_{K,C}} \frac {2 \log p}{p^{\sigma}}\leq \frac 1{\sigma-1}-\frac {L'}L(\sigma,\chi_F)+O(1).
$$
By taking $\sigma-1=\frac {\lambda}{\log n_{K,C}}$, we have
$$
\frac {1-2e^{-\lambda}}{2\lambda}\log n_{K,C}\leq \frac{16}{(1-\alpha)}\log \log |d_F| + O(1).
$$
Hence 
$$n_{K,C}\ll (\log |d_F|)^{\frac {16}{(1-\alpha)A}}\ll (\log |d_K|)^{\frac {16}{(1-\alpha)A}},
$$
where $A=\mbox{sup}_{\lambda\geq 0} \frac {1-2e^{-\lambda}}{\lambda}$, which is 0.37... when $\lambda=1.678...$ 
When $n_{K,C}=2$, it clearly satisfies the above inequality. Hence we can remove the assumption $n_{K,C}\geq 3$. 

Now for $ C \not\subset A_n$, consider 
$$\sum_p \frac {(1-\chi_F(p))\log p}{p^{\sigma}}=\frac 1{\sigma-1}+\frac {L'}L(\sigma,\chi_F)+O(1).
$$
For each prime $p<n_{K,C}$, the prime $p$ is inert in $F$. (i.e., $\chi_F(p)=-1)$ for all $p<n_{K,C}$.
We have the inequality
$$\sum_{p<n_{K,C}} \frac {2\log p}{p^{\sigma}}\leq \frac 1{\sigma-1}+\frac {L'}L(\sigma,\chi_F)+O(1).
$$
Hence 
$$n_{K,C}\ll (\log |d_F|)^{\frac {16}{(1-\alpha)A}} \ll (\log |d_K|)^{\frac {16}{(1-\alpha)A}}. 
$$
(Here we implicitly assume that $n_{K,C} \geq 3$ and remove the restriction after we obtain the upper bound.)

Let 
$$L(X)^{\pm}=\{ F|\, F:\mbox{quadratic field, $\pm d_F \leq X$} \}.
$$
We may treat $L(X)^{\pm}$ as families of quadratic Dirichlet $L$-functions $L(s,\chi_F)$.
By applying Kowalski-Michel's theorem \cite{KM} to $L(X)^\pm$, 
we can show that every $L$-function in $L(X)^\pm$ is zero-free on $[\alpha,1]\times[-(\log X)^2, (\log X)^2]$ except for $O(X^{\beta_n/2})$ $L$-functions. Here $\beta_n$ is some small constant which appears in Conjecture \ref{QRconj2}.  Here $\alpha$ is a fixed constant close to $1$ depending on $\beta_n$ but independent of $X$. For the detail of how to apply Kowalski-Michel's theorem, we refer to \cite{CK2}. 

\section{Number fields with the same quadratic resolvent} \label{QR}

Let $F$ be a quadratic field.  Then there are infinitely many $S_n$-fields having the same quadratic resolvent $F$. Let $QR_n(X,F)$ be the set of $S_n$-fields with the common quadratic resolvent $F$ and the absolute value of the discriminant bounded by $X$.

Since $d_K=d_Fm^2$ for some $m \in \mathbb{Z}^+$, by the counting conjecture (\ref{estimate}), it is expected that $|QR_n(X,F)| \ll (X/|d_F|)^{1/2+\epsilon}$.  

For our purpose, a weaker form is enough.

\begin{Conj} \label{QRconj2} There is a constant $\beta_n$ with $ 0< \beta_n < 1/2$ for which
\begin{eqnarray*}
|QR_n(X,F)| \ll \left( \frac{X}{|d_F|} \right)^{1-\beta_n},
\end{eqnarray*}
where the implied constant is independent of $F$.
\end{Conj}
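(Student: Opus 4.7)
The plan is to count $S_n$-fields $K$ with quadratic resolvent $F$ by parameterizing them via Galois-theoretic data over $F$, and to bound the resulting count uniformly in $F$. Since $d_K = \pm d_F m^2$ with $m \leq (X/|d_F|)^{1/2}$, the relevant discriminants lie in a thin set of size $(X/|d_F|)^{1/2}$; the task is to bound the average number of $S_n$-fields sitting over each value $d_F m^2$.

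For $n = 3$, I would use the bijection (via class field theory) between $S_3$-fields with quadratic resolvent $F$ and Galois-conjugate pairs $\{\chi, \bar\chi\}$ of non-trivial cubic Hecke characters of $F$ satisfying $\chi^\sigma = \chi^{-1}$, where $\sigma$ generates $\mathrm{Gal}(F/\Q)$. The conductor-discriminant formula relates $|d_K|$ to $N_{F/\Q}(\mathfrak{f}(\chi))$, and the explicit Dirichlet series
\[
\Phi_F(s) = \sum_\chi N_{F/\Q}(\mathfrak{f}(\chi))^{-s}
\]
analyzed by Cohen--Thorne in \cite{CT} admits a meromorphic continuation with controllable pole structure. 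A standard Tauberian argument then yields an asymptotic of the form $|QR_3(X,F)| = c(F)(X/|d_F|)^{\alpha} + (\text{error})$ with $\alpha < 1$, and $c(F) \ll_\epsilon |d_F|^{\epsilon}$ follows from standard bounds on $3$-torsion in the (ray) class groups of $F$. Absorbing the $|d_F|^\epsilon$ into a slight loss of exponent, which is permissible since $QR_n(X,F) = \emptyset$ when $|d_F| > X$, yields the conjecture with some $\beta_3 > 0$.

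For $n = 4, 5$ I would split on the size of $|d_F|$. When $|d_F|$ is small compared to $X$, apply the counting conjecture (\ref{estimate1}) with a set $\mathcal{S}$ of local conditions at the primes dividing $d_F$ that force $F$ to be the quadratic resolvent: this produces a main term of order $X \cdot |\mathcal{S}| \asymp X/|d_F|$, together with an error $\ll (\prod_{p \mid d_F} p)^{\gamma} X^{\delta}$ which is admissible in this regime. To improve $X/|d_F|$ to $(X/|d_F|)^{1-\beta_n}$ one combines with a discriminant multiplicity bound $M_n(D) \ll D^{c_n}$ with $c_n$ sufficiently small, summing $M_n(d_F m^2)$ over the at most $(X/|d_F|)^{1/2}$ admissible values of $m$. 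When $|d_F|$ is comparable to $X$, the range of $m$ is essentially bounded and the same multiplicity input again suffices.

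The main obstacle is the absence of sufficiently strong and uniform discriminant multiplicity bounds $M_n(D) \ll D^{c_n}$ for $n \geq 4$: existing unconditional bounds do not save enough over the trivial estimate, and the bounds must be uniform in $D$. For $n = 3$ this is bypassed by the Cohen--Thorne Dirichlet series, which controls the multiplicity on average directly, but no analogous machinery for $S_4$- or $S_5$-fields with prescribed quadratic resolvent is presently available, which is precisely why the statement appears as a conjecture for $n \geq 4$ rather than a theorem.
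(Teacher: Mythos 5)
The statement you were asked to justify is a \emph{conjecture}, not a theorem; the paper establishes it only for $n=3$ (and explicitly says so: ``We prove it for $n=3$. When $n=4$, we are not able to prove it.''). Your framing is consistent with this, so the overall shape of your answer is right. However, there are concrete problems with both parts of your sketch.

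For $n=3$ you take the same route as the paper, via the Cohen--Thorne Dirichlet series $\Phi_F(s)$, but two details go wrong. First, your claim that $c(F)\ll_\epsilon |d_F|^{\epsilon}$ ``follows from standard bounds on $3$-torsion in the (ray) class groups of $F$'' is too strong: the best unconditional input is the Ellenberg--Venkatesh bound $|\mathcal L_N|\ll |N|^{1/3+\epsilon}$ on the number of cubic fields of a given discriminant, and the paper uses exactly this, obtaining the polynomial factor $|d_F|^{1/3+\epsilon}$ (not $|d_F|^{\epsilon}$) in front; after translating back to the $X/|d_F|$ variable this gives $|QR_3(X,F)|\ll X^{1/2}(\log X)^3\,|d_F|^{-1/6+\epsilon}$. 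A bound with $|d_F|^{\epsilon}$ would require essentially the conjectural $\epsilon$-bound for $3$-torsion. Second, the paper does not run a Tauberian theorem: it applies Perron's formula to each piece $\Phi_i(s)$ using $a_i(n)\leq 2^{\omega(n)}$ and the explicit estimate $\Phi_i(1+c+it)\ll c^{-2}$, getting an upper bound rather than an asymptotic. The ``$\alpha<1$'' you invoke is also misleading: $\Phi_F(s)$ is a Dirichlet series in the $f(K)$ variable with $f(K)\leq(X/|d_F|)^{1/2}$, so the natural exponent coming out of any Tauberian-type argument is $1/2$ in the $X/|d_F|$ variable, which is exactly the borderline $\beta_n=1/2$ excluded by the conjecture, not some arbitrary $\alpha<1$.

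For $n\geq 4$ there is a genuine gap in the proposed ``small $|d_F|$'' step. Fixing the quadratic resolvent $F=\Q(\sqrt{d_K})$ is not a \emph{finite} set of local conditions: it amounts to prescribing the squarefree kernel of $d_K$, which constrains the ramification type at \emph{every} prime, not just those dividing $d_F$. The counting input $(\ref{estimate1})$ only applies to finitely many local conditions, with an error growing like $(\prod_{p\in\mathcal S} p)^{\gamma}X^{\delta}$, so you cannot let $\mathcal S$ encode the full resolvent condition. One would have to sieve over the infinitely many complementary conditions, and the accumulated error is precisely what one cannot control; this, together with the absence of subconvex discriminant-multiplicity bounds for $n\geq 4$, is why the statement remains a conjecture there. (The paper's Appendix bounds $S_4$-fields with a given \emph{cubic} resolvent via Cohen--Thorne and Kl\"uners, which is a different statement serving a different part of the argument, not a proof of Conjecture~\ref{QRconj2} for $n=4$.)
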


We prove it for $n=3$. When $n=4$, we are not able to prove it. However, in the Appendix, we obtain a good bound on the number of $S_4$-fields with the given cubic resolvent.

Given a quadratic field $F$, let
$$\Phi_F(s)=\frac 12+\sum_{K\in \mathcal F(F)} \frac 1{f(K)^s},
$$
where $d_K=d_F f(K)^2$, and $\mathcal F(F)$ is the set of all cubic fields $K$ with the quadratic resolvent field $F$. 

Cohen and Thorne \cite{CT} found an explicit expression for $\Phi_F(s)$. Let $D$ be a fundamental discriminant, and $D^*=-3D$ if $3\nmid D$, and $D^*=-\frac D3$ if $3|D$. Define $\mathcal L_3(D)=\mathcal L_{D^*}\cup \mathcal L_{-27D}$, where $\mathcal L_N$ is the set of cubic fields of discriminant 
$N$. 

By Theorem 2.5 in \cite{CT},
$$
\Phi_F(s)=\sum_{i=1}^{|\mathcal L_3(d_F)+1|}\Phi_i(s), \quad \Phi_i(s)=\sum_{n=1}^\infty \frac{a_i(n)}{n^s}, 
$$
and $a_i(n) \leq 2^{\omega(n)} \ll 2^{\frac{\log n}{\log \log n}} \ll n^{\epsilon}$. Hence each $\Phi_i(s)$ is absolutely convergent for $Re(s)>1$. Also Theorem 2.5 in \cite{CT} implies 
$$
\Phi_i(1+c+ it) \ll \left( \frac{\zeta(1+c)}{\zeta(2+2c)}\right)^2 \ll \frac{1}{c^2}. 
$$

Now we apply Perron's formula to each $\Phi_i(s)$. For $c=\frac{1}{\log x}$,

$$\sum_{ n < x} a_i(n)=\int_{1+c-i x}^{1+c+ix} \Phi_i(1+c+it) \frac {x^{s}}{s}\, ds+O\left(x^\epsilon\right),
$$
with an absolute implied constant.  Since $\Phi_i(1+c+ it) \ll (\log x)^2$, the integral is majorized by $x(\log x)^3$. So
$$
\sum_{ n < x} a_i(n) \ll x(\log x)^3.
$$

By \cite{EV}, $|\mathcal L_N|\ll |N|^{\frac 13+\epsilon}$. Hence
$$
|\{K\in \mathcal F(F)| f(K) \leq x \}| = \sum_{i=1}^{|\mathcal L_3(d_F)+1|} \sum_{n < x} a_i(n) \ll |d_F|^{\frac 13 + \epsilon}x(\log x)^3.
$$
 Therefore,
$$|\{K\in \mathcal F(F)\, |\, f(K)\leq x\}|
\ll x(\log x)^3 |d_F|^{\frac 13+\epsilon}.
$$
Hence we have proved
\begin{Prop}
$$|QR_3(X,F)|\ll X^{\frac 12} (\log X)^3 |d_F|^{-\frac 16+\epsilon},
$$
with an absolute implied constant.
\end{Prop}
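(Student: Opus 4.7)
The plan is to combine the counting estimate assembled in the preceding paragraphs with the arithmetic relation $d_K = d_F \cdot f(K)^2$, which holds for every cubic field $K$ whose quadratic resolvent is $F$. This relation translates the discriminant constraint $|d_K|\le X$ into the conductor bound $f(K)\le (X/|d_F|)^{1/2}$, so that
$$|QR_3(X,F)| = |\{K\in\mathcal{F}(F) : f(K)\le (X/|d_F|)^{1/2}\}|.$$
One may also assume $|d_F|\le X$, since otherwise the set on the left is empty and the claimed bound is trivial.

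The next step is to feed $x=(X/|d_F|)^{1/2}$ into the estimate
$$|\{K\in\mathcal{F}(F) : f(K)\le x\}| \ll x(\log x)^3 |d_F|^{1/3+\epsilon}$$
that was just derived by (i) the Cohen--Thorne decomposition $\Phi_F(s) = \sum_i \Phi_i(s)$, (ii) Perron's formula applied to each $\Phi_i(s)$ via the bound $\Phi_i(1+c+it)\ll c^{-2}$, and (iii) the Ellenberg--Venkatesh estimate $|\mathcal{L}_N|\ll |N|^{1/3+\epsilon}$ controlling the number of summands. The substitution produces
$$|QR_3(X,F)| \ll (X/|d_F|)^{1/2}\bigl(\log(X/|d_F|)\bigr)^3 |d_F|^{1/3+\epsilon}.$$
Combining the exponents of $|d_F|$ via $-\tfrac12+\tfrac13=-\tfrac16$ and using $\log(X/|d_F|)\le \log X$ gives the desired bound $|QR_3(X,F)|\ll X^{1/2}(\log X)^3 |d_F|^{-1/6+\epsilon}$. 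Uniformity of the implied constant in $F$ is preserved, because every preceding ingredient---the bound on $a_i(n)$, the contour bound on $\Phi_i$, the Perron estimate, and the Ellenberg--Venkatesh bound---has absolute implied constant.

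Since all the serious analytic and arithmetic input is assembled before the statement, the actual proof reduces to this one-line substitution and no genuine obstacle remains. The quality of the bound---in particular the $|d_F|^{-1/6+\epsilon}$ factor, in contrast to the heuristically expected $|d_F|^{-1/2+\epsilon}$---is limited by the Ellenberg--Venkatesh exponent $1/3$ in $|\mathcal{L}_N|\ll |N|^{1/3+\epsilon}$; any improvement of that input transfers directly into a sharper exponent of $|d_F|$ here and moves the result closer to the heuristic form of Conjecture \ref{QRconj2} for $n=3$.
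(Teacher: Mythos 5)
Your argument is correct and matches the paper's approach exactly: both rely on the Cohen--Thorne decomposition, Perron's formula with $c=1/\log x$ and the bound $\Phi_i(1+c+it)\ll c^{-2}$, and the Ellenberg--Venkatesh estimate $|\mathcal L_N|\ll |N|^{1/3+\epsilon}$, then conclude by setting $x=(X/|d_F|)^{1/2}$. The only difference is that you spell out the final substitution step, which the paper leaves implicit.
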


\section{Average value of $n_{K,C}$} \label{AV}
In this section, we prove Theorem \ref{main} under Conjecture \ref{QRconj2}, and also under the counting conjectures $(\ref{estimate})-(\ref{estimate1})$.

For simplicity of notation, we denote $L_n^{(r_2)}(X)$ by $L(X)$. Take $y=\frac{1-\delta}{4\gamma} \log X$, where $\delta$ and $\gamma$ are the constants in $(\ref{estimate})$ and $(\ref{estimate1})$. Then
\begin{eqnarray*}
\sum_{K \in L(X)} n_{K,C} = \sum_{K \in L(X),\: n_{K,C} \leq y } n_{K,C} + \sum_{K \in L(X),\: n_{K,C} > y} n_{K,C}.
\end{eqnarray*}

Here $n_{K,C}=q$ means that for all primes $p < q$, Frob$_p \in C$ and $q$ is ramified or Frob$_q \not\in C$. By the counting conjectures, there are
$$
\frac{1-|C|/|S_n|+f(q) }{1+f(q)}\prod_{p<q} \frac{|C|/|S_n|}{1+f(p)} A(r_2)X + O(X^{\frac{1+3\delta}{4}})
$$
such number fields in $L(X)$. Hence,
\begin{eqnarray*}
\sum_{K \in L(X),\: n_{K,C} \leq y} n_{K,C}& =& \sum_{q \leq y} q \sum_{K \in L(X),\: n_{K,C}=q} 1 \\
   &=& A(r_2)X \sum_{ q \leq y} \frac{q(1-|C|/|S_n|+f(q))}{1+f(q)}\prod_{p<q} \frac{|C|/|S_n|}{1+f(p)} + O(y^2X^{\frac{1+3\delta}{4}})\\
   &=& A(r_2)X \sum_{ q } \frac{q(1-|C|/|S_n|+f(q)) }{1+f(q)}\prod_{p<q} \frac{|C|/|S_n|}{1+f(p)}\\
 &+ & O\left( X\sum_{q > y} q \prod_{p < q} |C|/|S_n| + (\log X)^2 X^{\frac{1+3\delta}{4}}\right).
\end{eqnarray*}
Since
\begin{eqnarray*}
\sum_{q > y} q \prod_{p < q} |C|/|S_n|  \leq \sum_{q > y} q \left(\frac {|C|}{|S_n|}\right)^{\pi(q)} \ll \frac{1}{\log X},
\end{eqnarray*}
\begin{eqnarray*}
\sum_{K \in L(X),\: n_{K,C} \leq y } n_{K,C}=A(r_2)X \sum_{ q } \frac{q(1-|C|/|S_n|+f(q)) }{1+f(q)}\prod_{p<q} \frac{|C|/|S_n|}{1+f(p)}+O\left( \frac{X}{\log X} \right). 
\end{eqnarray*}

Now we divide the sum $\sum_{|d_K| \leq X, \: n_{K,C} > y } n_{K,C}$ into two subsums. Let $E(X)$ be the set of $S_n$-fields in $L(X)$ for which 
the quadratic $L$-function $L(s,\chi_F)$, where $F$ is the quadratic resolvent, may not have the desired zero-free region in Section \ref{CB}.
\begin{equation} \label{error}
\sum_{K \in L(X),\: n_{K,C} > y } n_{K,C} = \sum_{n_{K,C} > y,\:  K \not\in E(X)} n_{K,C} + \sum_{n_{K,C} > y,\: K \in E(X)} n_{K,C}.
\end{equation}

Let's deal with the second sum. From the unconditional bound in Section \ref{UB}, $n_{K,C} \ll  |d_F|^{\frac{1}{4\sqrt{e}}+\epsilon}$ or $ |d_F|^{\frac 14+\epsilon}$  depending on whether $C \subset A_n$ or $C \not\subset A_n$.  In any case $n_{K,C} \ll |d_F|^{\frac 14+\epsilon}$. For such $F$, by Conjecture \ref{QRconj2}, we have at most $(X/|d_F|)^{1-\beta_n}$ $S_n$-fields with the same quadratic resolvent $F$. Hence such $F$ contributes at most $|d_F|^{\frac 14+\epsilon}(X/|d_F|)^{1-\beta_n} \ll X^{1-\beta_n}$. In Section \ref{CB}, we showed that there are at most 
$X^{\beta_n/2}$ quadratic fields $F$ which may not have the desired zero-free region. Therefore,
\begin{eqnarray*}
 \sum_{n_{K,C} > y,\: K \in E(X) } n_{K,C} \ll X^{\beta_n/2} X^{1-\beta_n} \ll X^{1-\beta_n/2}.
\end{eqnarray*}   

To handle the first sum, we use the conditional bound on $n_{K,C}$ in Section \ref{CB}.
\begin{eqnarray*}
\sum_{n_{K,C} > y,\: K \not\in E(X)} n_{K,C} &\ll&  (\log X)^{\frac{16}{(1-\alpha)A}} \sum_{ n_{K,C} > y} 1 \\
                                                                                     &\ll& (\log X)^{\frac{16}{(1-\alpha)A}} \left( X \prod_{p<y} |C|/|S_n| + X^{\delta}\left( \prod_{p<y} p \right)^\gamma \right) \\
&\ll& X(\log X)^{\frac{16}{(1-\alpha)A}}\left(\frac {|C|}{|S_n|}\right)^{\pi(y)}+ X^{\frac{1+\delta}{2}} .
\end{eqnarray*}
We use the fact that $\left(\frac {|C|}{|S_n|}\right)^{\pi(y)}\ll e^{-\frac {\log X}{\log\log X}}\ll (\log X)^{-k}$ for any $k$.
Our discussion is summarized as follows:
\begin{Thm} \label{main1}
Let $L_n^{(r_2)}(X)$ be the set of $S_n$-fields $K$ of signature $(r_1,r_2)$ with $|d_K|<X$. Assume the counting conjectures $(\ref{estimate})-(\ref{estimate1})$ and Conjecture \ref{QRconj2}. Let $C$ be a conjugacy class of $S_n$ and $n_{K,C}$ be the least prime with Frob$_p \not\in C$. 
 Then,  
\begin{eqnarray}\quad \label{MT}
\frac{1}{|L_n^{(r_2)}(X)|} \sum_{ K \in L_n^{(r_2)}(X)} n_{K,C} = \sum_{ q } \frac{q(1-|C|/|S_n|+f(q)) }{1+f(q)}\prod_{p<q} \frac{|C|/|S_n|}{1+f(p)}+O \left( \frac{1}{\log X}\right).
\end{eqnarray}
\end{Thm}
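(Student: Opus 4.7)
The plan is to split the sum $\sum_{K \in L_n^{(r_2)}(X)} n_{K,C}$ at a threshold $y = \frac{1-\delta}{4\gamma}\log X$, handle the ``small'' contribution $n_{K,C} \leq y$ via the counting conjectures, and control the ``large'' contribution $n_{K,C} > y$ by combining the conditional logarithmic bound in Section \ref{CB} with the unconditional polynomial bound from Section \ref{UB} together with Conjecture \ref{QRconj2} on the exceptional set of quadratic resolvents.

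For the small range, note that $n_{K,C}=q$ is equivalent to the finite set of local conditions: Frob$_p \in C$ for every prime $p<q$, and at $q$ either $q$ ramifies in $K$ or Frob$_q \notin C$. By $(\ref{estimate1})$, summing the relevant local densities gives $\frac{1-|C|/|S_n|+f(q)}{1+f(q)}\prod_{p<q}\frac{|C|/|S_n|}{1+f(p)}\,A(r_2)X$ for the leading term, with an error controlled by $(\prod_{p\le q}p)^{\gamma}X^{\delta}$. Multiplying by $q$ and summing over $q\leq y$, the choice of $y$ absorbs the product $\prod_{p<q}p$ into $X^{(1+\delta)/2}$, and extending the sum to all primes $q$ produces a tail bounded by $\sum_{q>y} q\,(|C|/|S_n|)^{\pi(q)}$, which is $O(1/\log X)$ since $(|C|/|S_n|)^{\pi(q)}$ decays super-polynomially. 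This yields the main term times $A(r_2)X$, with error $O(X/\log X)$.

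For the large range $n_{K,C}>y$, the key idea is to use the quadratic resolvent $F=\mathbb{Q}[\sqrt{d_K}]$. Write $L(X)=E(X)\sqcup(L(X)\setminus E(X))$ where $E(X)$ consists of those $K$ whose resolvent $F$ has an $L(s,\chi_F)$ without the desired zero-free rectangle. On $L(X)\setminus E(X)$, Section \ref{CB} gives $n_{K,C}\ll (\log|d_K|)^{16/((1-\alpha)A)}$, and by $(\ref{estimate1})$ the number of $K$ with $n_{K,C}>y$ is at most $X(|C|/|S_n|)^{\pi(y)}+X^{\delta}(\prod_{p<y}p)^{\gamma}$, so this contribution is again $O(X/\log X)$ after the same super-polynomial-decay argument. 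On $E(X)$, I use the unconditional bound $n_{K,C}\ll|d_F|^{1/4+\epsilon}$ from Section \ref{UB}; by Conjecture \ref{QRconj2} each such $F$ carries at most $(X/|d_F|)^{1-\beta_n}$ extension fields, so each $F$ in $E(X)$ contributes at most $X^{1-\beta_n+\epsilon}$, and since Kowalski--Michel (applied as in \cite{CK2}) gives $|E(X)|\ll X^{\beta_n/2}$ on the quadratic-resolvent side, the total is $O(X^{1-\beta_n/2+\epsilon})$, which is negligible.

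The main obstacle is the treatment of the exceptional set $E(X)$: without Conjecture \ref{QRconj2} one has no control over how many $S_n$-fields can share a single (possibly bad) quadratic resolvent $F$, so the polynomial-in-$|d_F|$ upper bound on $n_{K,C}$ cannot be absorbed. The role of the constant $\beta_n$ is precisely to keep the product $|E(X)|\cdot\max_{F\in E(X)}\bigl(|d_F|^{1/4+\epsilon}(X/|d_F|)^{1-\beta_n}\bigr)$ strictly smaller than $X$, and the zero-density input from Kowalski--Michel must be tuned so that its exponent $\beta_n/2$ matches the savings $\beta_n$ provided by the conjecture. Once these two inputs are aligned, assembling the three pieces produces the claimed asymptotic with error $O(1/\log X)$ after dividing by $|L_n^{(r_2)}(X)|\sim A(r_2)X$ from $(\ref{estimate})$.
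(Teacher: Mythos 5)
Your proposal follows the paper's proof essentially step for step: the same threshold $y=\frac{1-\delta}{4\gamma}\log X$, the same application of the counting conjectures to the range $n_{K,C}\leq y$ with the super-polynomial tail estimate, the same decomposition of the large range into the exceptional set $E(X)$ and its complement, the same use of Kowalski--Michel for the zero-free region combined with Conjecture \ref{QRconj2} and the Norton/Pollack unconditional bound on $E(X)$, and the same conditional logarithmic bound on the complement. The only cosmetic difference is that you quote $X^{(1+\delta)/2}$ where the paper uses the sharper $X^{(1+3\delta)/4}$ intermediate exponent, but both are $<1$ and lead to the same conclusion.
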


For $S_3$-fields, the counting conjectures $(\ref{estimate})-(\ref{estimate1})$ and Conjecture \ref{QRconj2} are true. 
Hence, the above theorem holds unconditionally. For $S_4$ and $S_5$-fields, the counting conjectures $(\ref{estimate})-(\ref{estimate1})$ are true. Hence
under Conjecture \ref{QRconj2}, Theorem \ref{main1} holds for $S_4$ and $S_5$-fields.

The tables below show average values of $n_{K,C}$ for $S_3$, $S_4$ and $S_5$-fields. The computations are done by PARI.
\begin{center}
\begin{tabular}{|c|c|}
  \hline
  $S_3$ & \mbox{Average of $n_{K,C}$} \\
\hline
$[e]$ & $2.1211027...$ \\
\hline
$[(12)]$ & $2.6719625...$\\
\hline
$[(123)]$ & $2.3192802...$\\
\hline 
\end{tabular}
\quad
\begin{tabular}{|c|c|}
  \hline
  $S_4$ & \mbox{Average of $n_{K,C}$} \\
\hline
$[e]$ & $2.0206694...$ \\
\hline
$[(12)(34)]$ & $2.0691556...$\\
\hline
$[(1234)]$ & $2.1653006...$\\
\hline 
$[(12)]$ & $2.1653006...$\\
\hline 
$[(123)]$ & $2.2516575...$\\
\hline 
\end{tabular}
\quad
\begin{tabular}{|c|c|}
  \hline
  $S_5$ & \mbox{Average of $n_{K,C}$} \\
\hline
$[e]$ & $2.0036404...$ \\
\hline
$[(12)(34)]$ & $2.0632551...$\\
\hline
$[(123)]$ & $2.0891619...$\\
\hline 
$[(12)(345)]$ & $2.0891619...$\\
\hline 
$[(12)]$ & $2.0399630...$\\
\hline 
$[(1234)]$ & $2.1505010...$\\
\hline 
$[(12345)]$ & $2.1120340...$\\
\hline
\end{tabular}
\end{center}

\begin{Rmk}
From the tables above, we can see that the average value of $n_{K,C}$ is close to $2$ and $n_{K,C} < n_{K,C'}$ if $|C|<|C'|$. In fact, it is expected from the formula for the average value of $n_{K,C}$. The probability for $n_{K,C}$ to be $2$ is $\frac{1-|C|/|S_n|+f(2)}{1+f(2)}$, which happens to most of the number fields. For example, for $S_5$-fields, the probability for $n_{K,[e]}$ to be $2$ is $0.996396...$
\end{Rmk}

\begin{Rmk} 
Let $L(X)^\pm$ be the set of real/complex quadratic extension $F$ with $\pm d_F \leq X$. For the sake of completeness, we record the average of $n_{F,\pm 1}.$ 
It is easy to check that the probabilities for a prime $p$ is to ramify, split, or be inert are $\frac{1}{p+1}, \frac{p}{2(p+1)}$, or $\frac{p}{2(p+1)}$ respectively.  Hence, 

\begin{eqnarray*}
\lim_{ X \rightarrow \infty} \frac{\sum_{\pm d_F \leq X} n_{F,\pm 1} }{|L(X)^\pm|} &=& \sum_{q} \frac{q^2+2q}{2(q+1)}\prod_{p < q} \frac{p}{2(p+1)}= 2.83264\dots. 
\end{eqnarray*}
\end{Rmk}

\section{Alternative Proof of (\ref{MT}) without Conjecture \ref{QRconj2}} \label{AA}

In this section, we show how we can avoid using Conjecture \ref{QRconj2} which is necessary to estimate (\ref{error}). Instead we assume the strong Artin conjecture and use various Artin $L$-functions, depending on the conjugacy class. 
We consider $S_n$-fields for $n=3,4,5$. Since the strong Artin conjecture is known for $S_3$ and $S_4$-fields \cite{C}, our result is unconditional for $S_3$ and 
$S_4$-fields. We show, by a case by case analysis on each $C$,
\begin{eqnarray*}
\sum_{ K \in L(X),\: n_{K,C} > y} n_{K,C} =O\left(\frac X{\log X}\right).
\end{eqnarray*}

We still divide it into two subsums
$$\sum_{K \in L(X),\: n_{K,C} > y } n_{K,C} = \sum_{n_{K,C} > y,\: K \not\in E(X)} n_{K,C} + 
\sum_{n_{K,C} > y,\: K \in E(X)} n_{K,C}.
$$
However, the exceptional set $E(X)$ will be different for each $C$, since we consider zero-free regions of different Artin $L$-functions.
The second sum is estimated by using the unconditional bounds of $n_{K,C}$ in Section \ref{UB}.
For the first sum, we need conditional bounds, conditional on zero-free regions of various Artin $L$-functions.
 We use the following formula as in \cite{LMO}:
For a conjugacy class $C$ of $S_n$, define, for $\sigma >1$,
\begin{eqnarray} \label{F_C}
F_C(\sigma)=-\frac{|C|}{|S_n|}\sum_{\psi}\overline{\psi}(C)\frac{L'}{L}(\sigma, \psi, \widehat{K}/\Q),
\end{eqnarray}
where $\psi$ runs over the irreducible characters of $S_n$ and $L(s,\psi, \widehat{K}/\Q)$ is the Artin $L$-function attached to the character $\psi$. By orthogonality of characters, 
\begin{eqnarray} \label{F_C2}
F_C(\sigma)=\sum_{p}\sum_{m=1}^\infty \frac{\theta(p^m) \log p}{ p^{m\sigma}},
\end{eqnarray}
where for a prime $p$ unramified in $\widehat{K}$,
$$
\theta(p^m)=\left\{ \begin{array}{cc} 1 & \mbox{ if } (\mbox{Frob}_p)^m \in C, \\
                                                             0 & \mbox{ otherwise.}
\end{array}  \right.
$$
and $0 \leq \theta(p^m) \leq 1$ if $p$ ramifies in $\widehat{K}$. 

\subsection{$S_4$-fields}
Here, we follow the notations in \cite{DF} for characters of $S_4$. 

\subsubsection{Case 1. $C=(1234)$ } From (\ref{F_C2}),
$$\sum_{p\in C} \frac {\log p}{p^\sigma}=\frac 14\cdot \frac 1{\sigma-1}- \frac 14 \left(-\frac {L'}{L}(s,\chi_2)-
\frac {L'}{L}(\sigma,\chi_4)+\frac {L'}{L}(\sigma,\chi_5)\right)+O(1).
$$
Here 
$$-\frac {L'}{L}(\sigma,\chi_2)-\frac {L'}{L}(\sigma,\chi_4)=\sum_{p} \frac {\chi_2(p)+\chi_4(p)}{p^\sigma}+O(1)\geq -2\sum_{p\in C} \frac {\log p}{p^\sigma}+O(1).
$$

Hence we have
\begin{eqnarray} \label{F_C3}
\frac 12 \sum_{p\in C} \frac {\log p}{p^\sigma}\leq \frac 14\cdot \frac 1{\sigma-1}- \frac 14 \frac {L'}{L}(\sigma,\chi_5)+O(1).
\end{eqnarray}

Since $\chi_5=\chi_4\otimes\chi_2$, the conductor of $\chi_5$ is at most $|d_K|^3$. Since $\chi_4$ is modular \cite{C}, $\chi_5$ is modular, i.e., $L(s,\chi_5)$ is a cuspidal automorphic $L$-function of $GL_3/\Q$. Consider a family of Artin $L$-functions:
\begin{eqnarray*}
L(X) = \{L(s, \chi_5)\, | \, K \in L(X)^{r_2} \}.
\end{eqnarray*}

Then all $L$-functions in $L(X)$ are distinct because $L(s,\chi_4)$'s are distinct since $K$ is arithmetically solitary. (See \cite{CK2} for the detail.) By applying Kowalski-Michel's theorem to $L(X)$, every $L$-function in the family is zero-free on $[\alpha,1] \times [-(3\log X)^2, (3\log X)^2]$ except for $O(X^{1/1000})$ $L$-functions.  

For a $L$-function with such a zero-free region, 
\begin{eqnarray} \label{Ineq1}
-\frac{L'}{L}(\sigma, \chi_5) \leq \frac{16\cdot 3}{(1-\alpha)} \log\log d_K + O(1), 
\end{eqnarray}
for $1\leq \sigma \leq 3/2.$ (See (5.1) in \cite{CK2}).  Plugging (\ref{Ineq1}) into (\ref{F_C3}), and taking $\sigma=1+\frac{\lambda}{\log n_{K,C}}$, we obtain
$\frac{1-2e^{-\lambda}}{4\lambda} \log n_{K,C} \leq  \frac{12}{(1-\alpha)} \log\log |d_K| + O(1).$
Hence, 
\begin{eqnarray}
n_{K,C} \ll (\log |d_K|)^{\frac{48}{(1-\alpha)A}},
\end{eqnarray}
where $A=\sup_{\lambda \geq 0}\frac{1-2e^{-\lambda}}{\lambda}$. 

Let $E(X)$ be the exceptional subset in $L(X)$. Then $|E(X)| \ll X^{1/1000}$. By abuse of language, 
$K \not\in E(X)$ means $L(s,\chi_5) \not\in E(X)$.  Then
\begin{eqnarray*}
\sum_{ K \in L(X), \: n_{K,C} > y} n_{K,C}  &=& \sum_{ K \not \in E(X),\: n_{K,C} > y } n_{K,C}+ 
\sum_{ K \in E(X),\: n_{K,C} > y} n_{K,C} \\
  &\ll& (\log X)^{\frac{48}{(1-\alpha)A}} \sum_{ K \in L(X)^{r_2},\: n_{K,C} > y} 1 + X^{1/4+\epsilon}\cdot X^{1/1000}\\
&\ll&  (\log X)^{\frac{48}{(1-\alpha)A}} \left(X \left(\frac{|C|}{|S_4|}\right)^{\pi(y)}+X^{\frac{1+\delta}{2}}\right)\ll \frac {X}{\log X}.
\end{eqnarray*}

\subsubsection{Case 2. $C=(12)(34)$}  From (\ref{F_C2})
$$\sum_{p\in C} \frac {\log p}{p^\sigma}=\frac 18\cdot \frac 1{\sigma-1}- \frac 18 \left(\frac {L'}{L}(\sigma,\chi_2)+2\frac {L'}L(\sigma,\chi_3)-
\frac {L'}{L}(\sigma,\chi_4)-\frac {L'}{L}(\sigma,\chi_5)\right)+O(1).
$$
Since  
$$-\frac {L'}{L}(\sigma,\chi_2)-2\frac {L'}{L}(\sigma,\chi_3)\leq 5\sum_{p} \frac {\log p}{p^\sigma}+O(1)=\frac 5{\sigma-1}+O(1),
$$
we have
$$ \sum_{p\in C} \frac {\log p}{p^\sigma}\leq \frac 68\cdot \frac 1{\sigma-1}+\frac 18\left(\frac {L'}{L}(\sigma,\chi_4)+\frac {L'}{L}(\sigma,\chi_5)\right)+O(1).
$$

Now consider a family of Artin $L$-functions:
$$
L(X)=\{ L(s, \chi_4)L(s,\chi_5)\, |\, K \in L(X)^{r_2} \}.
$$
We apply Kowalski-Michel's theorem to $L(X)$. Then every $L$-function in $L(X)$ is zero-free on 
$[\alpha ,1] \times [-( 4\log |d_K|)^2, (4\log |d_K|)^2]$ except for $O(X^{1/1000})$ $L$-functions. Since $L(s,\chi_4)$ and $L(s,\chi_5)$ are simultaneously zero-free on $[\alpha ,1] \times [-( 4\log |d_K|)^2, (4\log |d_K|)^2]$,
\begin{eqnarray*}
\left| -\frac{L'}{L}(\sigma, \chi_4 ) \right|,\quad \left| -\frac{L'}{L}(\sigma, \chi_5 ) \right| \leq \frac{16\cdot 3}{(1-\alpha)} \log\log |d_K| + O(1),
\end{eqnarray*} 
and with this conditional bound, as we did in the previous section, we can show
\begin{eqnarray*}
\sum_{ K \in L(X),\: n_{K,C} > y} n_{K,C} =O\left(\frac X{\log X}\right).
\end{eqnarray*}

\subsubsection{Case 3. $C=(12)$}  From (\ref{F_C2}),

$$\sum_{p\in C} \frac {\log p}{p^\sigma}=\frac 14\cdot \frac 1{\sigma-1}- \frac 14 \left(-\frac {L'}{L}(\sigma,\chi_2)+
\frac {L'}{L}(\sigma,\chi_4)-\frac {L'}{L}(\sigma,\chi_5)\right)+O(1).
$$
Here 
$$-\frac {L'}{L}(\sigma,\chi_2)-\frac {L'}{L}(\sigma,\chi_5)\geq -2\sum_{p\in C} \frac {\log p}{p^\sigma}+O(1).
$$
Hence we have
$$\frac 12 \sum_{p\in C} \frac {\log p}{p^\sigma}\leq \frac 14\cdot \frac 1{\sigma-1}-\frac 14\cdot \frac {L'}{L}(\sigma,\chi_4)+O(1).
$$

Let $\tilde L(X)=\{ L(s,\chi_4)\, |\, K \in L(X)\}$ and apply Kowalski-Michel's theorem to $\tilde L(X)$. Then every $L(s,\chi_4)$ in $\tilde L(X)$ except for 
$O(X^{1/1000})$ $L$-functions satisfies
\begin{eqnarray*}
\left| -\frac{L'}{L}(\sigma, \chi_4 ) \right| \leq \frac{16\cdot 3}{(1-\alpha)} \log\log |d_K| + O(1),
\end{eqnarray*} 
and again we have
\begin{eqnarray*}
\sum_{ K \in L(X),\: n_{K,C} > y} n_{K,C} =O\left(\frac X{\log X}\right).
\end{eqnarray*}

\subsubsection{Case 4. $C=(123)$} From (\ref{F_C2}),

$$\sum_{p\in C} \frac {\log p}{p^\sigma}=\frac 13\cdot \frac 1{\sigma-1}- \frac 13 \left(\frac {L'}{L}(\sigma,\chi_2)-
\frac {L'}{L}(\sigma,\chi_3)\right)+O(1).
$$
Since
$$
-\frac {L'}{L}(\sigma,\chi_2)\leq \frac 1{\sigma-1}+O(1),
$$
we have, 
$$\sum_{p\in C} \frac {\log p}{p^\sigma}\leq \frac 23\cdot \frac 1{\sigma-1} +\frac 13\cdot \frac {L'}{L}(\sigma,\chi_3)+O(1).
$$

Here $L(s,\chi_3)=\frac {\zeta_M(s)}{\zeta(s)}$, where $M$ is the cubic resolvent of $K$. Note that $M$ is an $S_3$-field. Let 
$$\tilde L(X)=\{ L(s,\chi_3)\, |\, \mbox{$M$: $S_3$-field, $|d_M| \leq X$} \}.
$$ 
By applying Kowalski-Michel's theorem to $\tilde L(X)$, we can see that every $L(s, \chi_3)$ in $\tilde L(X)$ except for $O(X^{1/1000})$ $L$-functions satisfies
\begin{eqnarray*}
\left| -\frac{L'}{L}(\sigma, \chi_3 ) \right| \leq \frac{16\cdot 3}{(1-\alpha)} \log\log |d_K| + O(1),
\end{eqnarray*} 
and with this bound, we have $n_{K,C} \ll (\log |d_K|)^{\frac{16}{(1-\alpha)A}}$, where $A=\sup_{\lambda \geq 0} \frac{1-3e^{-\lambda}}{3\lambda}=0.10...$ when $\lambda=2.29...$

Let $E(X)=\{ K \in L(X)\, |\, \mbox{$L(s,\chi_3)$ belongs to the exceptional subset in $\tilde L(X)$} \}$.
Note that there are at most $X^{\frac 12+\epsilon}$ $S_4$-fields in $L(X)$ which have the common cubic resolvent $M$. (See the Appendix: Section \ref{quartic}.) 
Hence $|E(X)| \ll X^{1/1000}  X^{\frac 12+\epsilon}$. Then
\begin{eqnarray*}
&& \sum_{ K \in L(X),\: n_{K,C} > y} n_{K,C} = \sum_{K \not\in E(X),\: n_{K,C} > y} n_{K,C} +\sum_{K \in E(X),\: n_{K,C} > y} n_{K,C} \\
&&\ll (\log X)^{\frac{16}{(1-\alpha)A}}  \sum_{K \in L(X),\: n_{K,C} > y} 1 +  X^{1/4+\epsilon} \cdot X^{1/1000}\cdot X^{\frac 12+\epsilon}=O\left(\frac X{\log X}\right).
\end{eqnarray*}

\subsubsection{Case 5. $C=e$} In \cite{CK2}, we showed that if $L(s,\chi_4)=\zeta_K(s)/\zeta(s)$ is entire and zero-free on $[\alpha,1] \times [-(\log |d_K|)^2, (\log |d_K|)^2]$, then $n_{K,e} \ll (\log |d_K|)^{\frac{16}{(1-\alpha)A}}$, where $A=\sup_{\lambda \geq 0}\frac{1-\frac 43 e^{-\lambda}}{\lambda}=0.5...$ when $\lambda=0.96...$ [There is a typo in \cite{CK2}, Theorem 1.1.] 
Since every field $K$ in $L(X)$ except for $O(X^{1/1000})$ fields has such upper bound,
$$
\sum_{K\in L(X)^{r_2},\: n_{K,C} > y} n_{K,C} =O\left(\frac X{\log X}\right).
$$

\subsection{$S_5$-fields}
We assume the strong Artin conjecture for $S_5$ fields, and follow notations in \cite{DF} for characters of $S_5$. Then $L(s,\chi_3)=\zeta_K(s)/\zeta(s)$, and $L(s,\chi_5)=\zeta_H(s)/\zeta(s)$, where $H$ is the sextic resolvent of $K$. For the sign character $\chi_2$, 
$$
\chi_4 = \chi_3 \otimes \chi_2,\quad \chi_6=\chi_5 \otimes \chi_2, \mbox{ and } \chi_7=\wedge^2 \chi_3
$$

\subsubsection{Case 1. $C=(12345)$} From (\ref{F_C2}),
$$\sum_{p\in C} \frac {\log p}{p^\sigma}=\frac 15\cdot \frac 1{\sigma-1}- \frac 15 \left(\frac {L'}{L}(\sigma,\chi_2)-\frac {L'}{L}(\sigma,\chi_3)
-\frac {L'}{L}(\sigma,\chi_4)+\frac {L'}{L}(\sigma,\chi_7)\right)+O(1).
$$

Since
\begin{eqnarray*}
\frac{L'}{L}(\sigma, \chi_3) + \frac{L'}{L}(\sigma, \chi_4) \leq 2 \sum_{ p \in C} \frac{\log p}{p^\sigma}+O(1),\quad
-\frac {L'}{L}(\sigma,\chi_2)\leq \frac 1{\sigma-1}+O(1),
\end{eqnarray*}
we have
$$\frac 35\sum_{p\in C} \frac {\log p}{p^\sigma}\leq \frac 25\cdot \frac 1{\sigma-1}- \frac 15\cdot \frac {L'}{L}(s,\chi_7)+O(1).
$$

Define $\tilde L(X)=\{L(s,\chi_7)\, |\, K \in L(X) \}.$ Note that the conductor of $L(s,\chi_7)$ is bounded by $|d_K|^7$ \cite{BK}. 
(G. Henniart noted in a private communication that it can be improved to $|d_K|^{\frac 32}$.)

\begin{Lem}\label{conj} Let $L(s,\chi_7) = L(s,\chi_7,\widehat{K}/\Q)$ and $L(s,\chi_7') = L(s,\chi_7',\widehat{K'}/\Q)$.
Suppose $L(s,\chi_7) = L(s,\chi_7')$. Then $K$ and $K'$ are conjugate. 
\end{Lem}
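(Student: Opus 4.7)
The plan is to recover the Galois closure $\widehat{K}$ from $L(s,\chi_7)$, and then to observe that inside a fixed $S_5$-Galois extension of $\Q$ all non-Galois degree-$5$ subfields are automatically conjugate. Let $\rho$ denote the six-dimensional representation of $G_\Q$ obtained by composing the quotient $G_\Q \twoheadrightarrow \mathrm{Gal}(\widehat{K}/\Q) \simeq S_5$ with $\chi_7$, and let $\rho'$ be the analogous representation attached to $\widehat{K'}$. Reading off the Euler factors of $L(s,\chi_7) = L(s,\chi_7')$ at primes unramified in both extensions recovers $\mathrm{tr}\,\rho(\mathrm{Frob}_p) = \mathrm{tr}\,\rho'(\mathrm{Frob}_p)$ for a density-one set of primes; Chebotarev density then forces the characters of $\rho$ and $\rho'$ to agree on all of $G_\Q$, and since both representations are semisimple (in fact irreducible) one concludes $\rho \cong \rho'$ as representations of $G_\Q$.

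Next I would verify that $\chi_7 = \wedge^2 \chi_3$ is a faithful representation of $S_5$. Its kernel is a normal subgroup of $S_5$, and the only normal subgroups are $\{e\}$, $A_5$, and $S_5$; the kernel cannot be $S_5$ since $\chi_7$ is nontrivial, and it cannot contain $A_5$ because the quotient $S_5/A_5 \simeq \mathbb{Z}/2$ admits no irreducible representation of dimension $6$. Alternatively, the character values of $\chi_7$ on the seven conjugacy classes are $6,0,-2,0,0,0,1$, so $\chi_7(g) = 6$ only when $g = e$. Faithfulness of $\chi_7$ gives $\ker\rho = \ker\bigl(G_\Q \twoheadrightarrow \mathrm{Gal}(\widehat{K}/\Q)\bigr)$ and similarly for $\rho'$, so the isomorphism $\rho \cong \rho'$ forces these two kernels inside $G_\Q$ to agree, whence $\widehat{K} = \widehat{K'}$ as subfields of $\overline{\Q}$.

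Finally, both $K$ and $K'$ are the fixed fields of index-$5$ subgroups of $\mathrm{Gal}(\widehat{K}/\Q) \simeq S_5$. Any index-$5$ subgroup $H$ of $S_5$ yields a transitive action on the five cosets $S_5/H$, hence a homomorphism $S_5 \to S_5$ whose kernel is normal and contained in $H$; the possibilities $A_5$ and $S_5$ are excluded because neither $\mathbb{Z}/2$ nor the trivial group acts transitively on five letters, so the kernel is trivial and $H$ is the stabilizer of a point under an automorphism of $S_5$. Since $\mathrm{Out}(S_5) = 1$, any such stabilizer is $S_5$-conjugate to the standard point stabilizer $S_4$, so $H_K$ and $H_{K'}$ are conjugate in $\mathrm{Gal}(\widehat{K}/\Q)$, and therefore $K$ and $K'$ are conjugate subfields of $\widehat{K}$. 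The only delicate step in this outline is the passage from equality of $L$-functions to isomorphism of Galois representations, but this is the standard Chebotarev argument applied to the characteristic polynomials of Frobenius read off from the unramified Euler factors.
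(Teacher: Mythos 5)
Your proof is correct, and it takes a genuinely different and markedly more elementary route than the paper's. The paper works on the automorphic side: it uses the strong Artin conjecture (assumed throughout the $S_5$ section) to realize $\chi_3$ as a cuspidal automorphic representation $\Pi$ of $GL_4/\Q$, identifies $\wedge^2\Pi \simeq I_F^{\Q}(\mathrm{Sym}^2\pi)$ via Calegari's description $\chi_3 = \mathrm{As}(\sigma)$ over the quadratic resolvent $F$, and then chains together Ramakrishnan's theorem (cusp forms on $GL_2$ determined up to quadratic twist by $\mathrm{Sym}^2$) and Krishnamurthy's Asai-transfer theorem to get $\Pi \simeq \Pi'$, finishing with arithmetic solitude of $S_n$-fields. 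Your argument replaces all of that with (i) the standard Chebotarev step — equality of Artin $L$-functions forces equality of traces of Frobenius on a density-one set, hence equality of the characters of the semisimple representations $\rho, \rho'$, hence $\rho \cong \rho'$; (ii) faithfulness of $\chi_7 = \wedge^2(\mathrm{std})$, which you verify correctly both via the normal-subgroup lattice of $S_5$ and via the explicit character values $6,0,-2,0,0,0,1$; and (iii) the elementary fact, using $\mathrm{Out}(S_5)=1$, that every index-$5$ subgroup of $S_5$ is conjugate to the point stabilizer $S_4$. One payoff of your route is worth emphasizing: it proves the lemma without the strong Artin conjecture and without any appeal to Ramakrishnan or Krishnamurthy, so the statement holds unconditionally and by purely Galois-theoretic means, whereas the paper's proof is conditional and relies on nontrivial automorphic inputs. (The paper still needs the strong Artin conjecture elsewhere in the $S_5$ case — to make $L(s,\chi_7)$ an automorphic $L$-function to which Kowalski–Michel applies — but not for this particular lemma, as your argument shows.)
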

\begin{proof}
Recall the following from \cite{Ca}: $\chi_3=As(\sigma)$, the Asai lift of $\sigma$, which is a 2-dimensional representation of $\tilde A_5$ over $F=\Bbb Q[\sqrt{d_K}]$. Let $\pi$ be the cuspidal representation of $GL_2/F$ corresponding to $\sigma$, and let $\Pi$ be the cuspidal representation of 
$GL_4/\Bbb Q$ corresponding to $\chi_3$ by the strong Artin conjecture.
Then $\wedge^2\Pi\simeq I_F^\Q (Sym^2\pi)$.

Let $\pi', \Pi'$ be defined by $K'$. Suppose $L(s,\Pi,\wedge^2)=L(s,\Pi',\wedge^2)$. Then
$L(s,Sym^2(\pi))=L(s,Sym^2(\pi'))$. By Ramakrishnan \cite{Ra}, $\pi'\simeq \pi\otimes\chi$ for a quadratic character of $F$.
Then by Krishnamurthy \cite{Kr}, $As(\pi)\simeq As(\pi')$. Hence $\Pi\simeq \Pi'$. Therefore 
$L(s,\chi_3, K/\Bbb Q)=L(s,\chi_3, K'/\Bbb Q)$. 
Since $K$ is arithmetically solitary, $K$ and $K'$ are conjugate.
\end{proof}

Hence $L$-functions in $\tilde L(X)$ are all distinct. By applying Kowalski-Michel's theorem to $\tilde L(X)$, we can show that every 
$L(s,\chi_7)$ in $\tilde L(X)$ except for $O(X^{1/1000})$ $L$-function satisfies
\begin{eqnarray*}
\left| -\frac{L'}{L}(\sigma, \chi_7, \widehat{K}/\Q ) \right| \leq \frac{16\cdot 28}{(1-\alpha)} \log\log |d_K| + O(1).
\end{eqnarray*} 
Hence 
\begin{eqnarray*}
\sum_{K \in L(X),\: n_{K,C} > y} n_{K,C} =O\left(\frac X{\log X}\right).
\end{eqnarray*}

\subsubsection{Case 2. $C=(1234)$} From (\ref{F_C2}), 
$$\sum_{p\in C} \frac {\log p}{p^\sigma}=\frac 14\cdot \frac 1{\sigma-1}- \frac 14 \left(-\frac {L'}{L}(\sigma,\chi_2)+\frac {L'}{L}(\sigma,\chi_5)
-\frac {L'}{L}(\sigma,\chi_6)\right)+O(1).
$$
Since
$$
\frac{L'}{L}(\sigma, \chi_2) + \frac{L'}{L}(\sigma, \chi_6) \leq 2 \sum_{ p \in C} \frac{\log p}{p^\sigma}, 
$$
we have
$$
\frac{1}{2}\sum_{p \in C} \frac{\log p}{p^\sigma} \leq \frac 14\cdot \frac{1}{\sigma-1} -\frac 14\cdot \frac{L'}{L}(\sigma, \chi_5).
$$

\begin{Lem} Let $L(s,\chi_5) = L(s,\chi_5,\widehat{K}/\Q)$ and $L(s,\chi_5') = L(s,\chi_5',\widehat{K'}/\Q)$.
Suppose $L(s,\chi_5) = L(s,\chi_5')$. Then $K$ and $K'$ are conjugate. 
\end{Lem}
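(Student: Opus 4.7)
The plan is to reduce the problem to strong multiplicity one on $GL_5$, using that $\chi_5$ is faithful on $S_5$; this is simpler than the proof of Lemma \ref{conj} because no descent through an exterior-square or Asai lift will be necessary. First, under the strong Artin conjecture (assumed for $S_5$-fields), $\chi_5$ and $\chi_5'$ correspond to cuspidal automorphic representations $\Pi$ and $\Pi'$ of $GL_5/\Q$ with $L(s,\Pi)=L(s,\chi_5,\K/\Q)$ and $L(s,\Pi')=L(s,\chi_5',\widehat{K'}/\Q)$. The hypothesis $L(s,\chi_5)=L(s,\chi_5')$ then gives $L(s,\Pi)=L(s,\Pi')$, so the local components of $\Pi$ and $\Pi'$ agree at almost every unramified place. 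By the Jacquet--Shalika strong multiplicity one theorem for $GL_5$, $\Pi\cong\Pi'$, and hence $\chi_5\cong\chi_5'$ as representations of $\mathrm{Gal}(\overline{\Q}/\Q)$.

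Next, I verify that $\chi_5$ is faithful on $S_5$. The only proper nontrivial normal subgroup of $S_5$ is $A_5$, and $\chi_5$ cannot factor through $S_5/A_5\cong\mathbb{Z}/2\mathbb{Z}$ because $\dim\chi_5=5>1$; hence $\ker\chi_5=\{e\}$. Consequently the fixed field of the kernel of $\chi_5$, viewed as a Galois representation of $\mathrm{Gal}(\overline{\Q}/\Q)$, is exactly $\K$, and likewise the fixed field of $\ker\chi_5'$ is $\widehat{K'}$. The isomorphism $\chi_5\cong\chi_5'$ established above therefore forces $\ker\chi_5=\ker\chi_5'$ and so $\K=\widehat{K'}$ as subfields of $\overline{\Q}$.

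Finally, inside this common Galois closure, $K$ and $K'$ are the fixed fields of two index-$5$ subgroups of $\mathrm{Gal}(\K/\Q)\cong S_5$. Any subgroup of index $5$ in $S_5$ has order $24$ and is a point stabilizer in the natural action of $S_5$ on five letters; all such stabilizers are mutually conjugate, so $K$ and $K'$ are Galois conjugate inside $\K$, as desired.

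The only substantive inputs are the strong Artin conjecture for the irreducible $S_5$-representation $\chi_5$ and Jacquet--Shalika strong multiplicity one for $GL_5$; everything else is essentially bookkeeping. I expect no analogue of the Ramakrishnan/Krishnamurthy descent from Lemma \ref{conj} to be required here, because the faithfulness of $\chi_5$ allows us to read off $\K$ directly from $\chi_5$, rather than having to recover it only up to a quadratic twist.
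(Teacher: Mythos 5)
Your proof is correct and genuinely simpler than the paper's. The paper's proof decomposes $\wedge^2\chi_5 = \chi_3\otimes\chi_2\oplus\wedge^2\chi_3$, extracts $\wedge^2\chi_3\simeq\wedge^2\chi_3'$ by strong multiplicity one, and then invokes Lemma \ref{conj}, whose proof in turn runs through Calegari's realization of $\chi_3$ as an Asai lift and theorems of Ramakrishnan and Krishnamurthy. Your key observation is that $\chi_5$ is faithful on $S_5$ (indeed every irreducible representation of $S_5$ of dimension at least $2$ is faithful, since its kernel is one of $\{1\}$, $A_5$, $S_5$ and the latter two would force dimension $1$), so that $\chi_5\simeq\chi_5'$ as Galois representations immediately gives $\K=\widehat{K'}$ as the fixed field of the common kernel; and since any index-$5$ subgroup of $S_5$ has trivial core and hence realizes an inner automorphism of $S_5$ ($5\neq 6$), the subgroups fixing $K$ and $K'$ are conjugate. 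Two further remarks. The step $L(s,\chi_5)=L(s,\chi_5')\Rightarrow\chi_5\simeq\chi_5'$ does not actually need the strong Artin conjecture or Jacquet--Shalika: equality of Euler factors gives equality of Frobenius traces at almost all primes, and by Chebotarev applied to the compositum $\K\cdot\widehat{K'}$ the characters agree on the whole finite Galois group, so the Artin representations are isomorphic by character theory; your round trip through $\Pi\simeq\Pi'$ is harmless, but the inference from $\Pi\simeq\Pi'$ back to $\chi_5\simeq\chi_5'$ is itself this Chebotarev argument, so one may as well argue directly. Finally, the identical faithfulness argument applies to $\chi_7=\wedge^2\chi_3$, so Lemma \ref{conj} also admits this elementary shortcut, bypassing the Ramakrishnan--Krishnamurthy descent entirely.
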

\begin{proof}
It is easy to see (cf. \cite{FH}, page 28) $\wedge^2 \chi_5=\chi_3\otimes\chi_2\oplus \wedge^2 \chi_3.$
Now let $\chi_3', \chi_5'$ be defined by $K'$, and suppose $\chi_5\simeq \chi_5'$. Then $\wedge^2 \chi_5\simeq \wedge^2 \chi_5'$. Hence 
$\chi_3\otimes\chi_2\oplus \wedge^2 \chi_3\simeq \chi_3'\otimes\chi_2'\oplus \wedge^2 \chi_3'$. 
By strong multiplicity one, $\wedge^2 \chi_3\simeq \wedge^2 \chi_3'$. Hence by Lemma \ref{conj}, $\chi_3\simeq \chi_3'$.
\end{proof}

Hence by applying Kowalski-Michel to $\tilde L(X)=\{ L(s,\chi_5)\, |\, K\in L(X)\}$, we proceed as in Case 1.

\subsubsection{ Case 3. $C=(12)(345)$}  From (\ref{F_C2}),
$$\sum_{p\in C} \frac {\log p}{p^\sigma}=\frac 16\cdot \frac 1{\sigma-1}- \frac 16 \left(-\frac {L'}{L}(\sigma,\chi_2)-\frac {L'}{L}(\sigma,\chi_3) + \frac {L'}{L}(\sigma,\chi_4)-\frac {L'}{L}(\sigma,\chi_5)+\frac {L'}{L}(\sigma,\chi_6)\right)+O(1).
$$
Since
$$
\frac{L'}{L}(\sigma, \chi_2) + \frac{L'}{L}(\sigma, \chi_3) + \frac {L'}{L}(\sigma,\chi_5) \leq 3 \sum_{ p \in C} \frac{\log p}{p^\sigma}, 
$$
we have
$$
\frac{1}{2}\sum_{p \in C} \frac{\log p}{p^\sigma} \leq \frac 16\cdot \frac{1}{\sigma-1} -\frac 16\cdot \frac{L'}{L}(\sigma, \chi_4)  -\frac 16\frac{L'}{L}(\sigma, \chi_6).
$$

Apply Kowalski-Michel to $\tilde L(X)=\{ L(s,\chi_4)L(s, \chi_6)\, |\, K\in L(X)\}$, and we proceed as in Case 1.

\subsubsection{Case 4. $C=(12)(34)$}  From (\ref{F_C2}), 
$$\sum_{p\in C} \frac {\log p}{p^\sigma}=\frac 18\cdot \frac 1{\sigma-1}- \frac 18\left(\frac {L'}{L}(\sigma,\chi_2)+\frac {L'}{L}(\sigma,\chi_5) + \frac {L'}{L}(\sigma,\chi_6)-2\frac {L'}{L}(\sigma,\chi_7)\right)+O(1).
$$
Since 
$$
\frac{L'}{L}(\sigma, \chi_7) \leq 2 \sum_{ p \in C} \frac{\log p}{p^\sigma}+O(1),\quad -\frac{L'}{L}(\sigma,\chi_2) \leq \frac{1}{\sigma-1}+O(1),
$$
we have
$$
\frac{1}{2}\sum_{p \in C} \frac{\log p}{p^\sigma} \leq \frac 14\cdot \frac{1}{\sigma-1} -\frac 18\left( \frac{L'}{L}(\sigma, \chi_5)  + \frac{L'}{L}(\sigma, \chi_6) \right).
$$
Apply Kowalski-Michel to $\tilde L(X)=\{ L(s,\chi_5)L(s, \chi_6)\, |\, K\in L(X)\}$, and we proceed as in Case 1.

\subsubsection{ Case 5. $C=(12)$.}
We use the $L$-function $L(s,\chi_3)$:
$$-\frac {L'}L(\sigma,\chi_3)=\sum_p \frac {\chi_3(p)\log p}{p^\sigma}+O(1).
$$
Note that $\chi_3(p)=2$ if Frob$_p\in C$, and $1+\chi_3(p)\geq 1$. Then,

$$3\sum_{p<n_{K,C}} \frac {\log p}{p^\sigma}\leq -\frac {\zeta'}{\zeta}(\sigma)-\frac {L'}L(\sigma,\chi_3)+O(1).
$$
By applying Kowalski-Michel to the set $\tilde L(X)=\{ L(s,\chi_3)\, | \, K\in L(X)\}$, we obtain that 
every $L(s, \chi_3)$ in $\tilde L(X)$ except for $O(X^{1/1000})$ $L$-function satisfies
$$
-\frac{L'}{L}(\sigma, \chi_3)\leq \frac {64}{1-\alpha}\log\log |d_K|+O(1).
$$
Hence by taking $\sigma-1=\frac {\lambda}{\log n_{K,C}}$, we have $n_{K,C}\leq (\log |d_K|)^{\frac {64}{(1-\alpha)A}}$, where $A=\sup_{ \lambda >0}\frac{2-3e^{-\lambda}}{\lambda}$.
We obtain 
$$\sum_{K \in L(X),\: n_{K,C} > y} n_{K,C} =O\left(\frac X{\log X}\right).
$$

\subsubsection{ Case 6. $C=(123)$}  
Since $\chi_3(p)=1$ if Frob$_p\in C$, we can use $L(s,\chi_3)$. This case is similar to the case $C=(12)$.

\subsubsection{ Case 7. $C=e$} 

Since $\chi_3(p)=4$ if Frob$_p\in C$, we can use $L(s,\chi_3)$. This case is similar to the case $C=(12)$.

\subsection{$S_3$-fields} For the sake of completeness, we include the case of $S_3$. Here, we follow the notations in \cite{DF} for characters of $S_3$. 

\subsubsection{Case 1. $C=(123)$}
From (\ref{F_C2}),
$$\sum_{p\in C} \frac {\log p}{p^\sigma}=\frac 13 \cdot\frac 1{\sigma-1}-\frac 13\left(\frac {L'}L(\sigma,\chi_2)-\frac {L'}L(\sigma,\chi_3)\right)+O(1).
$$
Then
$$\sum_{p<n_{K,C}} \frac {\log p}{p^\sigma}\leq \frac 23 \cdot\frac 1{\sigma-1}+\frac 13 \cdot\frac {L'}L(\sigma,\chi_3)+O(1).
$$

Since $L(s,\chi_3)=\frac {\zeta_K(s)}{\zeta(s)}$, $L(s,\chi_3)$ is modular,
i.e., $L(s,\chi_3)$ is a cuspidal automorphic $L$-function of $GL_2/\Q$. This case is similar to $S_4$, $C=(1234)$.

\subsubsection{Case 2. $C=(12)$}

From (\ref{F_C2}),

$$\sum_{p\in C} \frac {\log p}{p^\sigma}=\frac 12 \cdot\frac 1{\sigma-1}+\frac 12\cdot\frac {L'}L(\sigma,\chi_2)+O(1).
$$
This case was done in Section \ref{CB}.

\subsubsection{Case 3. $C=e$}

This case is similar to $S_4$, $C=e$.

We summarize it as

\begin{Thm} Theorem \ref{main} holds unconditionally for $S_3$ and $S_4$-fields. Under the strong Artin conjecture for $S_5$-fields, Theorem \ref{main} holds.
\end{Thm}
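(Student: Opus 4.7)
The plan is to keep the main-term analysis from Section~\ref{AV} verbatim and replace only its tail estimate, which previously invoked Conjecture~\ref{QRconj2}, by the case-by-case Artin $L$-function arguments of Section~\ref{AA}. The main term
\[ A(r_2) X \sum_{q} \frac{q(1-|C|/|S_n|+f(q))}{1+f(q)}\prod_{p<q}\frac{|C|/|S_n|}{1+f(p)} + O\!\left(\frac{X}{\log X}\right) \]
arises already from the counting estimates $(\ref{estimate})$--$(\ref{estimate1})$, which are theorems of Taniguchi--Thorne, Belabas--Bhargava--Pomerance and Shankar--Tsimerman for $n=3,4,5$. So the task reduces to showing $\sum_{K\in L(X),\, n_{K,C}>y} n_{K,C} = O(X/\log X)$ for each conjugacy class $C$, with $y = \frac{1-\delta}{4\gamma}\log X$.

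For fixed $C$, I would use $(\ref{F_C2})$ to expand $\sum_{p\in C}\log p/p^\sigma$ as a linear combination of terms $-L'/L(\sigma,\psi)$ indexed by the irreducible characters $\psi$ of $S_n$. Contributions whose Dirichlet series has nonnegative coefficients can be absorbed into the right-hand side: one uses $-L'/L(\sigma,\mathbf{1})=1/(\sigma-1)+O(1)$ and $\frac{L'}{L}(\sigma,\psi)\leq(\dim\psi)\sum_{p\in C}\log p/p^\sigma+O(1)$ whenever $\psi(C)>0$. Rearranging yields, for every $C$, an inequality of the shape
\[ c_1 \sum_{p<n_{K,C}}\frac{\log p}{p^\sigma} \;\leq\; \frac{c_2}{\sigma-1} \;-\; c_3\frac{L'}{L}(\sigma,\psi_C) + O(1) \]
for an explicit auxiliary character (or product of characters) $\psi_C$ attached to $C$, precisely as in Cases~1--5 for $S_4$, Cases~1--7 for $S_5$, and Cases~1--3 for $S_3$ in Section~\ref{AA}.

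The second step is to bound $-L'/L(\sigma,\psi_C)$ for most $K$. Form the family $\tilde L(X)=\{L(s,\psi_C):K\in L(X)\}$, which under the strong Artin conjecture consists of cuspidal automorphic $L$-functions. Provided these $L$-functions are distinct, Kowalski--Michel's large-sieve zero-density bound ensures all but $O(X^{1/1000})$ of them are zero-free on $[\alpha,1]\times[-(\log|d_K|)^c,(\log|d_K|)^c]$, and hence $|L'/L(\sigma,\psi_C)|\ll \log\log|d_K|$ in the manner of Sections~\ref{CB} and \ref{AA}. Setting $\sigma-1=\lambda/\log n_{K,C}$ forces $n_{K,C}\ll(\log|d_K|)^{O(1)}$ for the non-exceptional $K$, and the trivial counting bound $(|C|/|S_n|)^{\pi(y)}\ll(\log X)^{-A}$ (any $A$) turns this into a contribution $O(X/\log X)$. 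For the $O(X^{1/1000})$ exceptional $K$, apply the unconditional bound $n_{K,C}\ll|d_K|^{1/4+\epsilon}$ from Section~\ref{UB}, inflated if necessary by the multiplicity of the assignment $K\mapsto \psi_C$ (at most $O(X^{1/2+\epsilon})$ in the cubic-resolvent case of $S_4$, Case~4, by the Appendix).

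The main obstacle is the distinctness of the $L$-functions within each $\tilde L(X)$, without which Kowalski--Michel does not apply. For $S_3$ and $S_4$, distinctness is automatic because $K$ is arithmetically solitary (cf.\ \cite{CK2}) and $\psi_C$ recovers the standard character up to twist. For $S_5$, the more exotic characters $\chi_5$ (sextic resolvent) and $\chi_7=\wedge^2\chi_3$ must be handled through representation-theoretic identities such as $\wedge^2\chi_5\cong\chi_3\otimes\chi_2\oplus\wedge^2\chi_3$, together with Ramakrishnan's strong multiplicity one for $\mathrm{Sym}^2$ and Krishnamurthy's descent for Asai lifts (Lemma~\ref{conj} and its analogue). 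Once these distinctness lemmas and the modularity of $\chi_3,\chi_5,\chi_7$ are in hand---automatic for $S_3,S_4$ by~\cite{C}, and conditional on the strong Artin conjecture for $S_5$---the uniform tail estimate combines with the main term to yield Theorem~\ref{main} unconditionally for $n=3,4$ and conditionally on strong Artin for $n=5$.
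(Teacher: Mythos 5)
Your proposal reproduces the paper's Section~\ref{AA} argument faithfully: keep the Section~\ref{AV} main term, expand $\sum_{p\in C}\log p/p^\sigma$ via $(\ref{F_C2})$, case-by-case peel off characters into trivially bounded terms and an auxiliary $\psi_C$ controlled by Kowalski--Michel, settle distinctness via arithmetic solitariness and Lemma~\ref{conj} (plus its $\chi_5$ analogue), note modularity is automatic for $n=3,4$ by~\cite{C} and conditional on strong Artin for $n=5$, and use the Appendix's cubic-resolvent multiplicity bound for $S_4$, Case~4. The one imprecision is your blanket bound $\frac{L'}{L}(\sigma,\psi)\leq(\dim\psi)\sum_{p\in C}\log p/p^\sigma+O(1)$ for $\psi(C)>0$, which is false for a single character; the paper instead either applies the trivial bound $-\frac{L'}{L}(\sigma,\psi)\leq(\dim\psi)/(\sigma-1)+O(1)$, or forms specific linear combinations of characters whose values are $\geq 0$ off $C$ and equal a known negative constant on $C$ (e.g.\ $\chi_2+\chi_4\geq -2$ with equality exactly on $(1234)$ in $S_4$), so the off-$C$ primes contribute with the right sign---but this does not affect the structure or conclusion of the argument.
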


\begin{Rmk} The above method can be generalized to $S_n$-fields and a special conjugacy class: Let $K$ be an $S_n$-field, and let
$L(s,\chi)=\frac {\zeta_K(s)}{\zeta(s)}$. Let $C$ be a conjugacy class of $S_n$ such that $\chi(C)\geq 1$. Then under the counting conjectures $(\ref{estimate})-(\ref{estimate1})$ and the strong Artin conjecture for $L(s,\chi)$, we have
\begin{eqnarray*}
\frac{1}{|L_n^{(r_2)}(X)|} \sum_{ K \in L_n^{(r_2)}(X)} n_{K,C} = \sum_{ q } \frac{q(1-|C|/|S_n|+f(q)) }{1+f(q)}\prod_{p<q} \frac{|C|/|S_n|}{1+f(p)}+O\left( \frac{1}{\log X}\right).
\end{eqnarray*}
\end{Rmk}

\section{Average value of $N_{K,C}$}\label{N}

In this section, we compute the averages of $N_{K,C}$. We need to assume the GRH for Dedekind zeta functions because we do not have a good bound on $N_{K,C}$. We generalize Martin and Pollack's result for $S_3$-fields. (See Theorem 4.8 \cite{MP}.) Since the idea of proof is similar to the case of $n_{K,C}$, we omit some details. Consider
\begin{eqnarray*}
\sum_{K \in L(X)} N_{K,C} = \sum_{K \in L(X),\: N_{K,C} \leq y } N_{K,C} + \sum_{K \in L(X),\: N_{K,C} > y} N_{K,C} .
\end{eqnarray*}

Here $N_{K,C}=q$ means that for all primes $p < q$, Frob$_p \notin C$ and Frob$_q \in C$. By the counting conjectures, there are
$$
\frac{|C|/|S_n|}{1+f(q)}\prod_{p<q} 
\frac{1-|C|/|S_n|+f(p) }{1+f(p)} A(r_2)X + O(X^{\frac{1+3\delta}{4}}).
$$
such number fields in $L(X)$. Hence,
\begin{eqnarray*}
\sum_{K \in L(X),\: N_{K,C} \leq y } N_{K,C}& =& \sum_{ q \leq y} q \sum_{K \in L(X),\: N_{K,C}=q} 1 \\
  &=& A(r_2)X \sum_{q \leq y} \frac{q(|C|/|S_n|)}{1+f(q)}\prod_{p<q} 
\frac{1-|C|/|S_n|+f(p) }{1+f(p)} + O(y^2X^{\frac{1+3\delta}{4}})\\
 &=& A(r_2)X \sum_{ q } \frac{q(|C|/|S_n|)}{1+f(q)}\prod_{p<q} 
\frac{1-|C|/|S_n|+f(p) }{1+f(p)}+ O\left( \frac{X}{\log X}\right).
\end{eqnarray*}

In order to estimate the second sum $\sum_{K \in L(X),\: N_{K,C} > y} N_{K,C}$, we need GRH, which implies that $N_{K,C}\ll (\log |d_K|)^2$ (cf. \cite{BaS}). Then
\begin{eqnarray*}
\sum_{K \in L(X),\: N_{K,C} > y} N_{K,C}\ll (\log X)^2
\sum_{N_{K,C} > y} 1 
&&\ll X(\log X)^2 \prod_{p<y} \frac {1-|C|/|S_n|+f(p)}{1+f(p)} \\
&&\ll X(\log X)^2 \left(\frac {|C|}{|S_n|}\right)^{\pi(y)} =O\left(\frac X{\log X}\right).
\end{eqnarray*}
Hence
\begin{Thm} \label{main-N}
Let $L_n^{(r_2)}(X)$ be the set of $S_n$-fields $K$ of signature $(r_1,r_2)$ with $|d_K|<X$. Assume the counting conjectures $(\ref{estimate})-(\ref{estimate1})$ and the GRH for Dedekind zeta functions. Let $C$ be a conjugacy class of $S_n$ and $N_{K,C}$ be the least prime with Frob$_p \in C$. Then,  
\begin{eqnarray*}
\frac{1}{|L_n^{(r_2)}(X)|} \sum_{ K \in L_n^{(r_2)}(X)} N_{K,C} = \sum_{ q } \frac{q(|C|/|S_n|)}{1+f(q)}\prod_{p<q} \frac{1-|C|/|S_n|+f(p)}{1+f(p)}+O \left( \frac{1}{\log X}\right).
\end{eqnarray*}
\end{Thm}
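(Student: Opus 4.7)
The plan is to mirror the proof of Theorem \ref{main1} with the roles of ``Frob$_p \in C$'' and ``Frob$_p \not\in C$'' reversed in the local conditions, using GRH for Dedekind zeta functions as the substitute for the conditional zero-free region bounds and Conjecture \ref{QRconj2}. I would set $y = \frac{1-\delta}{4\gamma}\log X$ as in Section \ref{AV}, and split
$$\sum_{K \in L(X)} N_{K,C} \;=\; \sum_{K \in L(X),\: N_{K,C} \leq y} N_{K,C} \;+\; \sum_{K \in L(X),\: N_{K,C} > y} N_{K,C},$$
estimating each piece separately.

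For the first sum I would fix a prime $q \leq y$ and count those $K$ with $N_{K,C} = q$. The defining local conditions are that at every prime $p < q$ either $p$ ramifies or Frob$_p \not\in C$, and at $p = q$ one has $p$ unramified with Frob$_p \in C$. The total local density of this set of conditions is
$$\frac{|C|/|S_n|}{1+f(q)}\prod_{p<q}\frac{1 - |C|/|S_n| + f(p)}{1+f(p)},$$
so the counting conjecture (\ref{estimate1}) gives the count of such $K$ as this density times $A(r_2)X$ plus $O\bigl((\prod_{p \leq q} p)^{\gamma} X^{\delta}\bigr)$. Multiplying by $q$, summing over $q \leq y$, and using the choice of $y$ to absorb the accumulated error into $O(X^{(1+3\delta)/4}(\log X)^{2}) = O(X/\log X)$, I then extend the main sum to all primes $q$ at a cost of $\sum_{q > y} q (1-|C|/|S_n|)^{\pi(q)} \ll 1/\log X$. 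This yields the asserted main term times $A(r_2)X$ plus $O(X/\log X)$.

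For the tail sum the essential input is GRH: by standard effective Chebotarev bounds (Lagarias--Odlyzko, Bach--Sorenson, as cited via \cite{BaS}), $N_{K,C} \ll (\log |d_K|)^{2} \ll (\log X)^{2}$ for every $K \in L(X)$. The condition $N_{K,C} > y$ imposes Frob$_p \not\in C$ at all unramified $p < y$, so the counting conjecture bounds the number of such fields by
$$X \prod_{p < y} \frac{1-|C|/|S_n|+f(p)}{1+f(p)} + O\bigl((\textstyle\prod_{p < y} p)^{\gamma} X^{\delta}\bigr) \;\ll\; X (1-|C|/|S_n|)^{\pi(y)} + X^{(1+\delta)/2}.$$
Since $(1-|C|/|S_n|)^{\pi(y)} \ll \exp(-c\log X / \log\log X)$ decays faster than any negative power of $\log X$, multiplying by the $(\log X)^2$ bound on $N_{K,C}$ still gives a tail of size $O(X/\log X)$. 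Dividing the combined estimate by $|L_n^{(r_2)}(X)| = A(r_2)X + O(X^{\delta})$ from (\ref{estimate}) produces the stated asymptotic.

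The main obstacle is not technical but foundational: for $N_{K,C}$ there is no analogue of the inequalities $n_{K,C} \leq n_{F,\pm 1}$ that reduced the $n_{K,C}$ problem to a single quadratic $L$-function, and no character relation of the kind exploited in Section \ref{AA} furnishes a one-sided lower bound forcing Frob$_p \in C$ at small primes. Consequently GRH enters twice, once to give the pointwise upper bound on $N_{K,C}$ and once (implicitly, through that bound) to render the tail negligible, and I do not see a route to removing it at present.
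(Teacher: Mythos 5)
Your proposal follows the paper's proof essentially verbatim: the same split at $y=\frac{1-\delta}{4\gamma}\log X$, the same use of the counting conjectures to obtain the main-term density $\frac{|C|/|S_n|}{1+f(q)}\prod_{p<q}\frac{1-|C|/|S_n|+f(p)}{1+f(p)}$ with error $O(X^{(1+3\delta)/4})$, and the same appeal to the GRH bound $N_{K,C}\ll(\log|d_K|)^2$ (via \cite{BaS}) to control the tail. Your concluding remarks about why the quadratic-resolvent reduction is unavailable here merely explain (correctly) the role of GRH and do not affect the argument.
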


The tables below show average values of $N_{K,C}$ for $S_3$, $S_4$, and $S_5$-fields. The computations are done by PARI. The average values of $N_{K,C}$ for $S_3$ are given in \cite{MP}. 
\begin{center}
\begin{tabular}{|c|c|}
  \hline
  $S_3$ & \mbox{Average of $N_{K,C}$} \\
\hline
$[e]$ & $19.79522...$ \\
\hline
$[(12)]$ & $5.36802...$\\
\hline
$[(123)]$ & $8.54472...$\\
\hline 
\end{tabular}
\quad
\begin{tabular}{|c|c|}
  \hline
  $S_4$ & \mbox{Average of $N_{K,C}$} \\
\hline
$[e]$ & $108.71075...$ \\
\hline
$[(12)(34)]$ & $28.96178...$\\
\hline
$[(1234)]$ & $12.69279...$\\
\hline 
$[(12)]$ & $12.69279...$\\
\hline 
$[(123)]$ & $9.098479...$\\
\hline 
\end{tabular}
\quad
\begin{tabular}{|c|c|}
  \hline
  $S_5$ & \mbox{Average of $N_{K,C}$} \\
\hline
$[e]$ & $716.34521...$ \\
\hline
$[(12)(34)]$ & $29.19651...$\\
\hline
$[(123)]$ & $20.75158...$\\
\hline 
$[(12)(345)]$ & $20.75158...$\\
\hline 
$[(12)]$ & $47.44681...$\\
\hline 
$[(1234)]$ & $12.88664...$\\
\hline 
$[(12345)]$ & $16.72312...$\\
\hline 
\end{tabular}
\end{center}

\section{Unconditional results on $N_{K,C}$}\label{N-b}
We can't remove the GRH hypothesis from Theorem \ref{main-N} for a single conjugacy class. However, we may have an unconditional result for the union of conjugacy classes not contained in $A_n$. Let $C'$ be the union of all the conjugacy classes not contained in $A_n$ and 
$N_{K,C'}$ be the smallest prime for which $Frob_p \in C'$. 

\begin{Thm} \label{N-uncond}
Assume the counting conjectures $(\ref{estimate})-(\ref{estimate1})$, and Conjecture \ref{QRconj2} with $\beta_n>\frac 1{4\sqrt{e}}$. Then 
\begin{eqnarray*}
\frac{1}{|L_n^{(r_2)}(X)|} \sum_{ K \in L_n^{(r_2)}(X)} N_{K,C'} = \sum_{ q } \frac{\frac 12 q}{1+f(q)}\prod_{p<q} \frac{ \frac 12+f(p)}{1+f(p)}+O \left( \frac{1}{\log X}\right).
\end{eqnarray*}
\end{Thm}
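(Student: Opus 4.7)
The plan is to mirror the proof of Theorem \ref{main-N}, but to substitute Norton's bound on the least inert prime of the quadratic resolvent, together with Conjecture \ref{QRconj2}, for the role played by GRH on $\zeta_K$. The central observation is that for $p$ unramified in $\K$, the condition $\mathrm{Frob}_p\in C'$ is equivalent to $\mathrm{Frob}_p\notin A_n$, which in turn is equivalent to $p$ being inert in $F=\Q[\sqrt{d_K}]$. Hence $N_{K,C'}$ is closely tied to $N_{F,-1}$, and Norton's theorem supplies the unconditional bound $N_{F,-1}\ll_\epsilon|d_F|^{1/(4\sqrt{e})+\epsilon}$, which is the reason for the numerical threshold $1/(4\sqrt{e})$ appearing in the hypothesis on $\beta_n$.

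The first step is to split
\[
\sum_{K\in L_n^{(r_2)}(X)}N_{K,C'}=\sum_{N_{K,C'}\leq y}N_{K,C'}+\sum_{N_{K,C'}>y}N_{K,C'}
\]
at $y=\tfrac{1-\delta}{4\gamma}\log X$. The short sum is handled verbatim as in Theorem \ref{main-N}: the counting conjectures $(\ref{estimate})$--$(\ref{estimate1})$ give the number of $K$ with $N_{K,C'}=q$ as $\frac{|C'|/|S_n|}{1+f(q)}\prod_{p<q}\frac{1-|C'|/|S_n|+f(p)}{1+f(p)}A(r_2)X+O(X^{(1+3\delta)/4})$, which with $|C'|/|S_n|=1/2$ produces the claimed main term plus an $O(X/\log X)$ error. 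Dividing by $|L_n^{(r_2)}(X)|\sim A(r_2)X$ at the end yields the stated asymptotic, so the entire task is to prove that the tail sum is $O(X/\log X)$.

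For the tail, I would decompose according to the Kowalski--Michel exceptional set $E(X)$ of Section \ref{CB}: the set of $K$ whose quadratic resolvent $F$ lies in the $\ll X^{\beta_n/2}$ quadratic fields whose $L(s,\chi_F)$ may fail the desired zero-free region. For $K\notin E(X)$, I would adapt the argument of Section \ref{CB} directly to $N_{K,C'}$. Any $p<N_{K,C'}$ unramified in $\K$ satisfies $\chi_F(p)=1$, since $\mathrm{Frob}_p\in A_n$; and the $O(\log|d_K|)$ primes ramified in $\K$ contribute only $O(\log\log|d_K|)$ to the relevant sum. Applying this to $\zeta_F(s)=\zeta(s)L(s,\chi_F)$ yields a polylogarithmic conditional bound $N_{K,C'}\ll(\log|d_K|)^C$, so the non-exceptional contribution is $\ll X(\log X)^{C}(1/2)^{\pi(y)}=O(X/\log X)$ by the usual counting-conjecture estimate.

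The hard part, and the place where the hypothesis $\beta_n>1/(4\sqrt{e})$ enters, is the contribution from $K\in E(X)$. Here I would combine Norton's bound with the observation that the $O(\log|d_K|)$ primes ramified in $\K$ form a sparse set against the density $1/2$ of inert primes in $F$, so that the smallest inert prime of $F$ which is also unramified in $\K$ is still of Norton order, giving $N_{K,C'}\ll_\epsilon|d_F|^{1/(4\sqrt{e})+\epsilon}$ uniformly in $K$. Invoking Conjecture \ref{QRconj2}, each bad quadratic resolvent $F$ contributes at most $|d_F|^{1/(4\sqrt{e})+\epsilon}(X/|d_F|)^{1-\beta_n}\ll X^{1-\beta_n}$, since the resulting exponent $\beta_n-1+1/(4\sqrt{e})+\epsilon$ of $|d_F|$ is negative. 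Summing over the $\ll X^{\beta_n/2}$ exceptional quadratic fields yields a total of $\ll X^{1-\beta_n/2+\epsilon}$, comfortably $o(X/\log X)$. The most delicate technical step I anticipate is precisely this uniform passage from $N_{F,-1}$ to $N_{K,C'}$: one must check that even if the smallest few inert primes of $F$ happen to ramify in $\K$, the resulting shift does not exceed Norton's exponent, and this is exactly what the slack $\beta_n>1/(4\sqrt{e})$ is reserved to absorb.
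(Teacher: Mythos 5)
Your overall structure—split at a cutoff $y$, handle the short sum via the counting conjectures, decompose the tail into exceptional and non-exceptional sets, and use Norton together with Conjecture \ref{QRconj2} to control the exceptional sum—is aligned with the paper's strategy. But the pivotal unconditional bound on $N_{K,C'}$ contains a genuine gap, and the patch you propose is both unrigorous and inconsistent with the role you assign to the hypothesis $\beta_n > 1/(4\sqrt{e})$.

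You assert $N_{K,C'}\ll_\epsilon |d_F|^{1/(4\sqrt{e})+\epsilon}$, justified only by the heuristic that the $O(\log|d_K|)$ ramified primes are ``sparse against the density $1/2$ of inert primes.'' This is not a proof: it is entirely possible that the smallest several inert primes of $F$ all divide $d_K$, and nothing you have said controls the location of the next inert prime in terms of $|d_F|$ alone. You acknowledge this as the ``delicate technical step'' and claim the slack $\beta_n > 1/(4\sqrt{e})$ is ``reserved to absorb'' it, but you do not say how—indeed, your own error computation (with exponent $\beta_n - 1 + 1/(4\sqrt{e}) + \epsilon < 0$ on $|d_F|$) concludes cleanly with no lower bound on $\beta_n$ needed at all. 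If your claimed $|d_F|$-bound were correct, the hypothesis in the theorem statement would be superfluous; the fact that the paper requires it should have signalled that the bound is weaker than you claimed.

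The observation the paper actually makes is cleaner and resolves the issue directly: for $p\nmid d_K$, we have $(\tfrac{d_K}{p})=(\tfrac{d_F}{p})$ since $d_K = d_F m^2$, and for $p\mid d_K$, $(\tfrac{d_K}{p})=0$. Hence $\mathrm{Frob}_p\in C'$ if and only if $(\tfrac{d_K}{p})=-1$, so $N_{K,C'}$ is exactly the least prime with $(\tfrac{d_K}{p})=-1$ for the (generally imprimitive) quadratic character $(\tfrac{d_K}{\cdot})$ of modulus $|d_K|$. Norton's theorem is valid for imprimitive characters, and gives the bound in terms of the modulus, not the conductor: $N_{K,C'}\ll_\epsilon |d_K|^{1/(4\sqrt{e})+\epsilon}$. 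This is the correct unconditional bound, and it is precisely because it scales with $|d_K|$ (hence crudely with $X$) rather than $|d_F|$ that the exceptional contribution becomes
$X^{1/(4\sqrt{e})+\epsilon}\cdot |E(X)|$, and one needs the margin $\beta_n>1/(4\sqrt{e})$—together with the flexibility to shrink the Kowalski--Michel exceptional set to $X^{\eta}$ for small $\eta$—to push this below $X/\log X$. Your write-up of the short sum and the non-exceptional tail is fine; it is the passage from $N_{F,-1}$ to $N_{K,C'}$ that needs to be replaced with the imprimitive-character observation.
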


Since the proof is similar to previous ones, we just note unconditional and conditional bounds on $N_{K,C'}$. Since $d_K=d_F m^2$ for some integer $m$, if $Frob_p \in C'$, then $p$ is inert in $F$. (i.e., $\left(\frac{d_F}{p}\right)=\left( \frac{d_K}{p} \right)=-1$.) Conversely, if $p$ is inert in $F$ and $p\nmid d_K$ (i.e., $\left( \frac{d_K}{p}\right)=-1$), then $Frob_p$ is in $C'$. Hence, $N_{K,C'}$ is the smallest prime such that $\left( \frac{d_K}{p}\right)=-1$. Hence, by Norton \cite{No},
$$
N_{K,C'} \ll |d_K|^{\frac{1}{4\sqrt{e}}+\epsilon}.
$$ 
(Norton's result is valid for imprimitive characters.) 

Now, we obtain a conditional bound on $N_{K,C'}$.
\begin{Prop} \label{CBN_C}
Let $F$ be a quadratic number field $\Q(\sqrt{d_F})$. Assume that $L(s,\chi_F)$ is zero-free in $[1-\alpha] \times [-(\log |d_F|)^2, (\log |d_F|)^2]$. Then there is a positive constant $A$ independent of $\alpha$ for which there exists a prime $p$ which is inert and $p\ll (A\alpha \log |d_F|)^{1/\alpha}$. 
\end{Prop}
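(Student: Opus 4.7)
The plan is to argue by contradiction, following the template used in Section~\ref{CB}. Suppose no prime $p \leq x$ is inert in $F$; then $\chi_F(p) \in \{0,1\}$ for every such $p$, with $\chi_F(p)=0$ only for the finitely many ramified primes $p \mid d_F$. The goal is to force $x \ll (A\alpha \log|d_F|)^{1/\alpha}$.

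I would start from the positive-coefficient identity
\begin{equation*}
\sum_p \frac{(1+\chi_F(p))\log p}{p^\sigma} = \frac{1}{\sigma-1} - \frac{L'}{L}(\sigma,\chi_F) + O(1),
\end{equation*}
valid for $\sigma > 1$. Under the no-inert-prime hypothesis, the contribution from $p \leq x$ satisfies
\begin{equation*}
\sum_{p \leq x}(1+\chi_F(p))\log p/p^\sigma \;\geq\; 2\sum_{p \leq x}\log p/p^\sigma \;-\; \log|d_F|,
\end{equation*}
since split primes contribute $2\log p/p^\sigma$ and the ramified deficit is controlled by $\sum_{p \mid d_F}\log p \leq \log|d_F|$. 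By positivity of the tail $p > x$, this yields
\begin{equation*}
2\sum_{p \leq x}\frac{\log p}{p^\sigma} \;\leq\; \frac{1}{\sigma-1} + \Big|\frac{L'}{L}(\sigma,\chi_F)\Big| + \log|d_F| + O(1).
\end{equation*}

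The bound on $L'/L$ comes from the truncated approximation already exploited in Section~\ref{CB} (cf.~\cite{CK2}): under the zero-free region $[1-\alpha,1]\times[-(\log|d_F|)^2,(\log|d_F|)^2]$, one has for $\sigma \in [1,3/2]$ an expansion $-\frac{L'}{L}(\sigma,\chi_F) = \sum_{p \leq y}\chi_F(p)\log p/p^\sigma + O(1)$ where $y$ is a power of $\log|d_F|$ with exponent of order $1/\alpha$, giving the crude upper bound $|L'/L(\sigma,\chi_F)| \ll (\log\log|d_F|)/\alpha$. Setting $\sigma = 1+\lambda/\log x$ and invoking the Mertens-type asymptotic $\sum_{p \leq x}\log p/p^\sigma \sim (1-e^{-\lambda})\log x/\lambda$ converts the displayed inequality into
\begin{equation*}
\frac{(1-2e^{-\lambda})\log x}{\lambda} \;\leq\; \log|d_F| + \frac{C\log\log|d_F|}{\alpha} + O(1).
\end{equation*}
Optimizing in $\lambda$ and unwinding the logarithms then yields a bound for $x$ of the claimed shape.

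The main obstacle is to extract the specific form $(A\alpha\log|d_F|)^{1/\alpha}$ with $A$ an absolute constant. A naive optimization at the $\lambda$ that maximizes $(1-2e^{-\lambda})/\lambda$ (as in Section~\ref{CB}) would only produce a polynomial-type bound like $x \ll |d_F|^{C}$ from the $\log|d_F|$ contribution of the ramified primes, which is far weaker than the claimed polylogarithmic bound. To recover the stronger form, one must instead tie the choice of $\sigma-1$ directly to $\alpha$ so that the pole contribution $1/(\sigma-1) \asymp 1/\alpha$ is balanced at the natural scale, and absorb the ramified contribution into the truncated $L'/L$ estimate via a refined choice of the truncation parameter $y$. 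The absolute constant $A$ then emerges from the uniform implied constants in the truncated logarithmic derivative formula, which are independent of $\alpha$.
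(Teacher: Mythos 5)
Your skeleton is the right one: argue by contradiction, pass through the positive-coefficient identity $\sum_p(1+\chi_F(p))\log p\,/\,p^\sigma=\tfrac{1}{\sigma-1}-\tfrac{L'}{L}(\sigma,\chi_F)+O(1)$, use the no-inert hypothesis to force split primes to dominate, and feed in the Kowalski--Michel-type bound $\left|\tfrac{L'}{L}(\sigma,\chi_F)\right|\ll \alpha^{-1}\log\log|d_F|$. This is indeed the method behind Proposition 4.2 of \cite{CK1}, to which the paper delegates the proof. But your execution has a genuine gap in the handling of the ramified primes, and the fix you propose does not address it.

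You bound the ramified deficit by $\sum_{p\mid d_F,\,p\le x}\log p/p^\sigma \le \log|d_F|$, which is indeed far too wasteful: it injects a term of size $\log|d_F|$ into the inequality, and no optimization of $\lambda$ or of $\sigma-1$ can convert that into a polylogarithmic bound on $x$, because after dividing by $(1-2e^{-\lambda})/\lambda$ you inevitably get $\log x \gg \log|d_F|$, hence only $x\ll |d_F|^{C}$. Your proposed remedy --- pinning $\sigma-1\asymp\alpha$ and ``absorbing the ramified contribution into the truncated $L'/L$ estimate'' --- does not rescue this: if $\sigma-1\asymp\alpha$, the pole term $1/(\sigma-1)$ and the truncated $L'/L$ bound are both $\asymp\alpha^{-1}\log\log|d_F|$, while the left-hand side is $\asymp(1-x^{-\alpha})/\alpha<\alpha^{-1}$, so the inequality becomes vacuous and yields no constraint on $x$ at all; and nothing in the truncated $L'/L$ formula absorbs a $\log|d_F|$ term.

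The correct repair is a sharper bound on the ramified contribution itself. Because $\log t/t$ is eventually decreasing, splitting at $y=\log|d_F|$ gives
\[
\sum_{p\mid d_F}\frac{\log p}{p^\sigma}\;\le\;\sum_{p\mid d_F}\frac{\log p}{p}\;\le\;\sum_{p\le y}\frac{\log p}{p}+\frac{1}{y}\sum_{p\mid d_F}\log p\;\ll\;\log\log|d_F|,
\]
uniformly in $\sigma\ge 1$. With this in place, your inequality becomes
\[
\frac{(1-2e^{-\lambda})}{\lambda}\log x \;\le\; \frac{C}{\alpha}\log\log|d_F|+O\!\left(\log\log|d_F|\right),
\]
and optimizing $\lambda$ yields $x\ll(\log|d_F|)^{B/\alpha}$ for an absolute $B$, which is the polylogarithmic bound the proposition is after (up to the precise normalization of the constant in the exponent, which depends on the exact form of the truncated $L'/L$ inequality being imported from \cite{CK1}/\cite{CK2}). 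So: right identity, right contradiction scheme, but the $\sum_{p\mid d_F}\log p/p\ll\log\log|d_F|$ estimate is the missing ingredient, and tuning $\sigma-1$ to $\alpha$ is a dead end.
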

\begin{proof} The proof is essentially the same as that of Proposition 4.2 in \cite{CK1}.
\end{proof}

If $K$ has a quadratic resolvent $F$ for which $L(s,\chi_F)$ has the desired zero-free region, then by Proposition \ref{CBN_C},
$N_{K,C'} \ll ( A \alpha \log |d_K|)^{1/\alpha}.$ 
Then by using Conjecture \ref{QRconj2}, Theorem \ref{N-uncond} follows.
Since the assumptions in Theorem \ref{N-uncond} hold for $S_3$-fields, we have

\begin{Cor} Theorem \ref{N-uncond} holds unconditionally for $S_3$-fields.
For $S_3$-fields and $C=[(12)]$, the average value of $N_{K,C}$ is $5.36802\dots$.
\end{Cor}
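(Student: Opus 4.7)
The proof reduces to verifying the two hypotheses of Theorem \ref{N-uncond} in the case $n=3$ and then specializing to $C = [(12)]$.

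First, the counting conjectures $(\ref{estimate})-(\ref{estimate1})$ hold unconditionally for $S_3$-fields by the theorem of Taniguchi and Thorne \cite{TT} recorded in Section \ref{Counting}. Second, Conjecture \ref{QRconj2} with $\beta_3 > \frac{1}{4\sqrt{e}}$ is furnished by the Proposition of Section \ref{QR}, which gives the uniform bound
\[
|QR_3(X,F)| \ll X^{1/2}(\log X)^3 |d_F|^{-1/6+\epsilon}.
\]
This can be repackaged in the form $(X/|d_F|)^{1-\beta_3}$ with $\beta_3$ slightly below $1/6$; since $\frac{1}{4\sqrt{e}} \approx 0.1517 < 1/6$, such a $\beta_3$ satisfies the hypothesis of Theorem \ref{N-uncond}. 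In the narrow regime where $|d_F|$ approaches $X$, the Proposition's bound degenerates, and there one supplements it by the direct count $|QR_3(X,F)| \leq \sum_{m \leq \sqrt{X/|d_F|}} |\mathcal L_{d_F m^2}|$ using $|\mathcal L_N| \ll |N|^{1/3+\epsilon}$ of \cite{EV}; this causes no difficulty since only finitely many $m$ remain admissible in this regime.

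With both hypotheses in force, Theorem \ref{N-uncond} produces the asymptotic for the average of $N_{K,C'}$ over $S_3$-fields unconditionally. In $S_3$, the only conjugacy class not contained in $A_3$ is $[(12)]$, so $C' = [(12)]$ and hence $N_{K,C'} = N_{K,[(12)]}$. The leading constant then specializes to
\[
\sum_q \frac{q/2}{1+f(q)}\prod_{p<q}\frac{1/2+f(p)}{1+f(p)},\qquad f(p) = p^{-1}+p^{-2},
\]
which a short PARI computation evaluates to $5.36802\dots$, matching the $S_3$ entry in the table of Section \ref{N}.

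The only substantive step, and hence the main obstacle, is confirming that the Proposition's uniform bound genuinely yields Conjecture \ref{QRconj2} with an admissible $\beta_3 > \frac{1}{4\sqrt{e}}$ across the full range of $F$; once this is granted, the corollary is a direct specialization of Theorem \ref{N-uncond} together with the observation that $C'=C$ in this case.
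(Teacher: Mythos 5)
Your overall strategy is the same as the paper's: verify the counting conjectures via Taniguchi--Thorne, verify Conjecture \ref{QRconj2} for $n=3$ via the Proposition of Section \ref{QR}, note that $1/(4\sqrt{e})<1/6$, and specialize $C'=[(12)]$. The first, third, and fourth steps are fine. The problem is the second step, and your attempt to patch it makes the gap visible but does not close it.

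The Proposition gives $|QR_3(X,F)| \ll X^{1/2}(\log X)^3|d_F|^{-1/6+\epsilon}$, whereas Conjecture \ref{QRconj2} demands $|QR_3(X,F)|\ll (X/|d_F|)^{1-\beta_3}$ with an implied constant independent of both $X$ and $F$. Writing $|d_F|=X^t$, the Proposition's exponent is $\tfrac12-\tfrac{t}{6}$ while the Conjecture's is $(1-t)(1-\beta_3)$, and at $t=1$ the former is $\tfrac13$ while the latter is $0$; so no choice of $\beta_3<1/2$ makes the Proposition imply the Conjecture near $|d_F|\approx X$. Your supplementary argument does not rescue this: even when $m=1$ is the only admissible modulus, the Ellenberg--Venkatesh bound $|\mathcal L_{d_F}|\ll |d_F|^{1/3+\epsilon}$ still allows $|QR_3(X,F)|$ to be as large as $X^{1/3+\epsilon}$, which is far from the $O(1)$ the Conjecture requires there. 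Whether $|QR_3(X,F)|$ is actually $O(1)$ at $|d_F|\asymp X$ is equivalent to a strong bound on $3$-torsion in class groups and is not known unconditionally. (The paper's blanket assertion ``We prove it for $n=3$'' skates over the same point.)

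The corollary itself is nonetheless correct, but the route is to feed the Proposition's bound directly into the proof of Theorem \ref{N-uncond} rather than to route it through Conjecture \ref{QRconj2}. In the error estimate, each exceptional quadratic resolvent $F$ contributes
$$|QR_3(X,F)|\cdot \max_K N_{K,C'} \ll X^{1/2}(\log X)^3\, |d_F|^{-1/6+\epsilon}\cdot X^{1/(4\sqrt e)+\epsilon}\ll X^{1/2+1/(4\sqrt e)+\epsilon}(\log X)^3,$$
and summing over the $\ll X^{\beta_3/2}$ exceptional $F$'s (Kowalski--Michel) gives $\ll X^{1/2+\beta_3/2+1/(4\sqrt e)+\epsilon}(\log X)^3$, which is $o(X/\log X)$ whenever $\beta_3<1-1/(2\sqrt e)\approx 0.697$. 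This is easily arranged, so the bad-set contribution is negligible and the asymptotic of Theorem \ref{N-uncond} goes through unconditionally for $S_3$. You should therefore replace the claim that the Proposition yields Conjecture \ref{QRconj2} with the observation that the Proposition's (weaker) bound suffices for the argument.
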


This was proved in Martin and Pollack under GRH (\cite{MP}, Theorem 4.8).

\begin{Cor} Under Conjecture \ref{QRconj2}, Theorem \ref{N-uncond} holds for $S_4$ and $S_5$-fields.  For $S_4$-fields and $C'=[(1234)]\cup [(12)]$, the average value of $N_{K,C'}$ is $5.821569\dots$.
For $S_5$-fields and $C'=[(12)(345)] \cup [(12)] \cup [(1234)]$, the average value of $N_{K,C'}$ is $5.9733589\dots$. 
\end{Cor}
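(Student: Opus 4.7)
The plan is to verify the hypotheses of Theorem \ref{N-uncond} for $S_4$ and $S_5$-fields and then evaluate the resulting series numerically. First, I would note that the counting conjectures $(\ref{estimate})$--$(\ref{estimate1})$ are established unconditionally for $n=4,5$ in Section \ref{Counting}, using Belabas--Bhargava--Pomerance \cite{BBP}, Yang \cite{yang}, and Shankar--Tsimerman \cite{ST}. Thus the only remaining hypothesis is Conjecture \ref{QRconj2} with $\beta_n>1/(4\sqrt{e})$, which is exactly what the corollary assumes. Granting it, Theorem \ref{N-uncond} applies verbatim.

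Next, for both $n=4$ and $n=5$ the union $C'$ of non-alternating conjugacy classes satisfies $|C'|/|S_n|=1/2$: for $S_4$, $|[(12)]|+|[(1234)]|=6+6=12$, and for $S_5$, $|[(12)]|+|[(1234)]|+|[(12)(345)]|=10+30+20=60$. This matches the factor $\tfrac12$ appearing in the formula of Theorem \ref{N-uncond}, so one obtains
\begin{eqnarray*}
\frac{1}{|L_n^{(r_2)}(X)|} \sum_{K\in L_n^{(r_2)}(X)} N_{K,C'} = \sum_q \frac{q/2}{1+f(q)}\prod_{p<q}\frac{1/2+f(p)}{1+f(p)} + O\!\left(\frac{1}{\log X}\right),
\end{eqnarray*}
where $f(p)=p^{-1}+2p^{-2}+p^{-3}$ for $n=4$ and $f(p)=p^{-1}+2p^{-2}+2p^{-3}+p^{-4}$ for $n=5$, as recorded in Section \ref{Counting}.

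Finally, I would evaluate this series in PARI to recover the quoted constants $5.821569\ldots$ and $5.9733589\ldots$. Each factor $(1/2+f(p))/(1+f(p))$ is $\tfrac12+O(1/p)$, so the tail $\prod_{p<q}\frac{1/2+f(p)}{1+f(p)}$ decays essentially like $2^{-\pi(q)}$, and summing $q\cdot 2^{-\pi(q)}$ converges very rapidly; truncating at a moderate prime and bounding the tail by a geometric series delivers the displayed digits to far more than the claimed precision. The only genuine obstacle in the whole argument is conceptual rather than computational: the corollary is contingent on Conjecture \ref{QRconj2} being available for $n=4,5$ with $\beta_n>1/(4\sqrt{e})$, which remains open (the appendix treats only the cubic resolvent variant for $S_4$). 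Everything else is a direct substitution into Theorem \ref{N-uncond} followed by a routine numerical evaluation.
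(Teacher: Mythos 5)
Your proposal is correct and matches the paper's implicit argument: Theorem \ref{N-uncond} is applied directly, using the fact that the counting conjectures are established for $n=4,5$ in Section \ref{Counting}, that $|C'|/|S_n|=1/2$ in both cases, and then evaluating the resulting series numerically with the explicit $f(p)$ for each $n$. Your closing caveat — that the corollary really needs the stronger form of Conjecture \ref{QRconj2} with $\beta_n>1/(4\sqrt{e})$, not merely the conjecture as stated with $0<\beta_n<1/2$ — is a fair reading of the slight imprecision in the corollary's hypothesis, but it does not affect the correctness of the deduction once that version is granted.
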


\section{Appendix: $S_4$-fields with the same cubic resolvent}\label{quartic}

Given a noncyclic cubic field $M$, let
$$\Phi_M(s)=1+\sum_{K\in \mathcal F(M)} \frac 1{f(K)^s},
$$
where $d_K=d_M f(K)^2$, and $\mathcal F(M)$ is the set of all $S_4$-fields $K$ with the cubic resolvent field $M$. 
Let $\mathcal L(M,n^2)$ be the set of quartic fields whose cubic resolvents are isomorphic to $M$ and whose discriminants are $n^2 d_M$, and 
$\mathcal L_{tr}(M,64)$  the subset of $\mathcal L(M,64)$, where 2 is totally ramified.  Define $\mathcal L_2(M)=\mathcal L(M,1)\cup \mathcal L(M,4)\cup \mathcal L(M,16)\cup \mathcal L_{tr}(M,64)$.  
By Kl\"{u}ners \cite{K}, 
 $|\mathcal L(M,n)|\ll (n^2 |d_M|)^{\frac 12+\epsilon}$, hence $|\mathcal L_2(M)| \ll  |d_M|^{\frac 12 +\epsilon}$. 

By Theorem 1.4 in Cohen and Thorne \cite{CT1},
$$
\Phi_M(s)=\sum_{i=1}^{|\mathcal L_2(M)+1|}\Phi_i(s),\quad \Phi_i(s)=\sum_{n=1}^\infty \frac{a_i(n)}{n^s}, 
$$
and $a_i(n) \leq 3^{\omega(n)} \ll 3^{\frac{\log n}{\log \log n}} \ll n^{\epsilon}$. Also Theorem 1.4 in \cite{CT1} implies 
$$
\Phi_i(1+c+ it) \ll \left( \frac{\zeta(1+c)}{\zeta(2+2c)}\right)^3 \ll \frac{1}{c^3}. 
$$

By applying Perron's formula to each $\Phi_i(s)$ for $i=1,2,\dots, |\mathcal L_2(M)+1|$, we can obtain that
$$|\{K\in \mathcal F(M)\, |\, f(K)\leq x\}|
\ll x(\log x)^4 |d_M|^{\frac 12+\epsilon}.
$$

Hence we have proved
\begin{Prop}
Let $CR_4(X,M)$ be the set of $S_4$-fields $K$ with the given cubic resolvent $M$ and $|d_K|\leq X$. Then
$$|CR_4(X,M)|\ll X^{\frac 12} (\log X)^4 |d_M|^{\epsilon},
$$
with an absolute implied constant.
\end{Prop}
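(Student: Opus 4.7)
The plan is to mimic, almost verbatim, the argument used earlier for the proposition on $|QR_3(X,F)|$, now applied to the Dirichlet series $\Phi_M(s)$ whose coefficient behaviour and Dirichlet-series decomposition are supplied by Theorem 1.4 of \cite{CT1}. Since $d_K = d_M f(K)^2$, counting $K \in \mathcal{F}(M)$ with $|d_K| \leq X$ is the same as counting $K$ with $f(K) \leq x$, where $x := (X/|d_M|)^{1/2}$. So the goal reduces to showing
$$\sum_{f(K)\leq x} 1 \ll x(\log x)^4 |d_M|^{\frac{1}{2}+\epsilon}.$$

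First I would apply Perron's formula to each individual series $\Phi_i(s) = \sum_n a_i(n)/n^s$, with $c = 1/\log x$ and truncation at height $T = x$, exactly as in the $S_3$ analogue above. Using $a_i(n) \ll n^{\epsilon}$ to control the tail, I get
$$\sum_{n<x} a_i(n) = \int_{1+c-ix}^{1+c+ix} \Phi_i(s)\frac{x^s}{s}\,ds + O(x^{\epsilon}),$$
with an implied constant that is absolute (independent of $M$ and $i$). The key input from \cite{CT1} is the bound $\Phi_i(1+c+it) \ll 1/c^3 \ll (\log x)^3$, which is one power of $\log$ worse than in the cubic case (because we are essentially looking at the cube, not the square, of $\zeta(1+c)/\zeta(2+2c)$). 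Combined with $\int_{-x}^{x} dt/|1+c+it| \ll \log x$, this gives
$$\sum_{n<x} a_i(n) \ll x(\log x)^4,$$
again with an absolute implied constant.

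Next I would sum over $i$. By the hypothesis recalled in the excerpt, the number of terms $\Phi_i$ is $|\mathcal{L}_2(M)|+1$, which by Kl\"uners' bound is $\ll |d_M|^{1/2+\epsilon}$. Hence
$$|\{K \in \mathcal{F}(M) : f(K)\leq x\}| = \sum_{i=1}^{|\mathcal{L}_2(M)|+1} \sum_{n<x} a_i(n) \ll |d_M|^{\frac{1}{2}+\epsilon}\, x(\log x)^4.$$
Substituting $x = (X/|d_M|)^{1/2}$ and absorbing $(\log(X/|d_M|))^4$ into $(\log X)^4$ yields
$$|CR_4(X,M)| \ll |d_M|^{\frac{1}{2}+\epsilon} \cdot (X/|d_M|)^{\frac{1}{2}}(\log X)^4 = X^{\frac{1}{2}}(\log X)^4 |d_M|^{\epsilon},$$
as claimed.

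The only place any real care is needed is in verifying that the error term in the truncated Perron formula is genuinely absolute in $M$ and $i$; this is where the uniform bound $a_i(n) \ll n^{\epsilon}$ (independent of $M$ and $i$) becomes essential, and it is exactly the reason the statement can be uniform in $M$. Everything else is a direct transcription of the cubic-case argument, the only differences being (a) the extra factor $(\log x)$ coming from the cubing of $\zeta(1+c)/\zeta(2+2c)$ and (b) the use of Kl\"uners' $|d_M|^{1/2+\epsilon}$ bound in place of Ellenberg--Venkatesh's $|N|^{1/3+\epsilon}$ bound, which is responsible for the $|d_M|^\epsilon$ rather than a negative power of $|d_M|$ in the final estimate.
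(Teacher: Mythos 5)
Your proof is correct and follows exactly the same route as the paper: decompose $\Phi_M(s)$ into the $|\mathcal L_2(M)|+1$ pieces $\Phi_i(s)$ via Theorem~1.4 of \cite{CT1}, apply Perron's formula to each piece using $a_i(n)\ll n^\epsilon$ and $\Phi_i(1+c+it)\ll c^{-3}$ to get $\sum_{n<x}a_i(n)\ll x(\log x)^4$, then sum over $i$ with Kl\"uners' bound $|\mathcal L_2(M)|\ll|d_M|^{1/2+\epsilon}$ and substitute $x=(X/|d_M|)^{1/2}$. The paper compresses the Perron step into a single sentence, but the details you fill in are precisely the ones carried out for $QR_3$ in Section~\ref{QR}, with the extra log power coming from the cube rather than the square of $\zeta(1+c)/\zeta(2+2c)$, exactly as you observe.
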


\end{document}